\newtheorem{thm}{Theorem}[section]
\newtheorem{cor}[thm]{Corollary}
\newtheorem{lemma}{Lemma}
\newtheorem{remark}{Remark}
\newtheorem{prop}[thm]{Proposition}
\def\R{{\mathbb R}}
\def\Z{{\mathbb Z}}
\def\N{{\mathbb N}}
\def\Q{{\mathbb Q}}
\def\1{{\mathbf 1}}
\def\a0{{\aleph_0}}
\def\FF{{\mathcal{F}}}
\def\GG{{\mathcal{G}}}
\def\HH{{\mathcal{H}}}
\def\0{{\mathbf{0}}}
\def\mi{\mu}
\def\ni{\nu}
\def\eps{{\varepsilon}}
\newcommand{\ra}{{\rightarrow}}
\newcommand{\lin}{{\rm lin}}
\newcommand{\supp}{{\rm supp}}
\def\bn{{\big|\big|}}
\title{A series whose sum range is an arbitrary finite set}
\author{Jakub Onufry Wojtaszczyk
\\
Department of Mathematics, Computer Science and Mechanics \\ University of Warsaw\\ ul. Banacha 2,
02-097 Warsaw, Poland\\ email: onufry@duch.mimuw.edu.pl}
\date{March 21, 2004}
\begin{document}

\maketitle
\begin{abstract}
In finitely-dimensional spaces the sum range of a series has to be an affine subspace. It is long
known this is not the case in infinitely dimensional Banach spaces. In particular in 1984 M.I.
Kadets and K. Wo\`{z}niakowski obtained an example of a series the sum range of which consisted of
two points, and asked whether it is possible to obtain more than two, but finitely many points.
This paper answers the question positively, by showing how to obtain an arbitrary finite set as the sum range of a series in any infinitely dimensional Banach
space.
\end{abstract}

\section{Introduction}
For a finitely-dimensional linear space $X$ the well-known Steinitz theorem states that for any
conditionally convergent series the set of all possible limits of the series (called the {\em sum
range}) is a affine subspace of $X$. In the "Scottish Book" S. Banach posed the problem whether
the same holds for infinitely dimensional Banach spaces. The problem was solved negatively in the
same book by J. Marcinkiewicz. In his example the sum range is the set $M$ of all integer-valued functions in $L_2[0,1]$. The next example, due to M. I. Ostrovskii, showed that the sum range does not have
to be a closed set - the sum range of Ostrovskii's series was of the form $M + \sqrt{2}M$.
Finally M. I. Kadets constructed an example in which the sum range consisted of two points,
disproving, in particular, H. Hadwiger's conjecture that the sum range has to be the coset of some
additive subgroup of $X$. The justification of the example was obtained independently by K.
Wo\`{z}niakowski and P. A. Kornilov in 1986.

It is still unknown what sets can be the sum ranges of series. In this paper it is shown that any finite subset of $X$ can be the sum range of a conditionally convergent series,
which solves the problem posed by M. I. Kadets along with his two-point example (the problem is
stated in \cite{Kadets} in the general case, and in \cite{Ulyanov} for $X = C(\Delta)$ and $n=3$).
The example is an extension of the 2-point example of M. I. Kadets as given in \cite{Kadets}. As
far as possible I shall try to keep the notation consistent with the notation given there, although
the lack of suitable letters in the latin alphabet will force me to abandon the notation in a few
places.

Everywhere all spaces are considered with the $L_1$ norm, i.e. $||f||_X = \int_X|f(x)|dx$.
Frequently it is assumed it is obvious on which space the norm is taken, and only $||f||$ is
written.

\section{The results of K. Wo\`{z}niakowski} \label{Wozza}

The work in this paper is strongly inspired by the 2-point example of M. I. Kadets and the proof by
K. Wo\`{z}niakowski. In this paper not only the final result of Wo\`{z}niakowski's work will be
used, but also multiple technical facts than can be found in the proof. Rather than force the
reader to search for those in the original paper, I shall reiterate here Wo\`{z}niakowski's work,
at times formulating the results in a way that will make them easier to use in the subsequent
sections. The whole content of this section in based on \cite{Kadets}, and a reader familiar with
this work may probably skip to the next section.

Let $Q = [0,1]^\omega$ be the infinite dimensional cube, i.e. the product of a countable number of
unit segments, equipped with the standard product topology and measure. By $x = (x_1,x_2,\ldots)$
we shall denote the variable on $Q$. Suppose we have two sequences of functions on the cube:
$a_m^n$ and $b_{m,j}^n$, where $n\in \N$ and for a given $n$ both $m$ and $j$ belong to some finite
sets $M_n$ and $J_n = M_{n+1}$ respectively. By $A_n$ we shall denote the set $\{a_m^n : m \in
M_n\}$, and by $B_n$ the set $\{b_{m,j}^n : m \in M_n, j \in J_n\}$. For convenience if $X$ is a
set of functions, by $\tilde{X}$ we shall denote the sum of the functions from $X$. We shall assume
the following properties of the functions $a_m^n$ and $b_{m,j}^n$:

\begin{eqnarray}
\tilde{A_n}(x) &=& 1 \qquad  \forall_{n\in \N} \forall_{x \in Q} \label{suma1} \\ ||a_m^n|| &=& \frac{1}{|M_n|}
\label{norma} \\ \lim_{n\ra\infty} |M_n| &=&\infty \label{ssize} \\ &&\mbox{The function $a_m^n$
depends only on the variable $x_n$} \label{depa}
\\&& \mbox{The functions $a_m^n$ assume only values 0 and 1} \label{intva} \\
b_{m,j}^n &=& - a_m^n \cdot a_j^{n+1} \label{multab} \end{eqnarray}

We shall this collection of properties the {\em Kadets properties} on the cube $Q$. These properties mean that for each $n$ the interval $[0,1]$ is divided into $|M_n|$ sets $V_m^n$ of equal measure, and $a_m^n (x_1, x_2, \ldots) = 1$ iff $x_n \in V_m$. The functions $b_{m,j}^n$ are negative, and are supported on rectangles $(x_n,x_{n+1}) \in V_m^n \times V_j^{n+1}$.

From the Kadets properties we can easily deduce another few properties, mainly about the behaviour
of $b_{m,j}^n$ based on properties \ref{suma1} and \ref{multab}:

\begin{eqnarray}a_m^n &=& - \sum_{j\in J_n} b_{m,j}^n, \label{absame} \\ a_j^{n+1} &=& -\sum_{m\in M_n}
b_{m,j}^n\label{abdiff} \\ \tilde{B_n}(x) &=& -\1 \forall_{n\in\N} \label{sumb1}
\\ ||b_{m,j}^n|| &=& \frac{1}{|M_n \times J_n|} \label{normb} \\ \\ && \mbox{The function
$b_{m,j}^n$ depends only on the variables $x_n$ and $x_{n+1}$}  \label{depb} \\ && \mbox{The
functions $b_{m,j}^n$ assume only values 0 and -1} \label{intvb} \\ && \mbox{$a_m^n$ and $a_{m'}^n$
have almost disjoint supports for $m \neq m'$} \label{disjoint}\end{eqnarray}

These properties follow easily from the Kadets properties. In property \ref{disjoint} by almost disjoint supports we mean that the intersection of two supports is of measure zero, we can obviously modify $a_m^n$ so that the Kadets properties still hold and the sets $\{x : a_m^n(x) > 0\}$ are disjoint for any constant $n$ and any two different values of $m$.

Let $c_k, k \in \N$ be any ordering of all the functions $a_m^n$ and $b_{m,j}^n$. Following
Wo\`{z}niakowski we shall investigate the convergence of any reordering $c_{\sigma(k)}$ of $c_k$.

\begin{prop} \label{propsums} For any family of functions $c_k$ having the Kadets properties there
exist such two permutations $\sigma$ and $\tau$ of $\N$ that $\sum c_{\sigma(k)} \ra \0$ and $\sum
c_{\tau(k)} \ra \1$.\end{prop}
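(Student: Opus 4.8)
The plan is to exhibit two reorderings by specifying, for each partial sum, which functions have already been used. The key structural fact is that the $a$'s sum to $\1$ (property \ref{suma1}) while the $b$'s sum to $-\1$ (property \ref{sumb1}), and moreover $a_m^n = -\sum_j b_{m,j}^n$ and $a_j^{n+1} = -\sum_m b_{m,j}^n$ (properties \ref{absame}, \ref{abdiff}). So the $b_{m,j}^n$ act as a ``bridge'': summing all of $A_n$ gives $\1$, but one can instead cancel each $a_m^n$ against the block $\{b_{m,j}^n : j \in J_n\}$, whose sum is $-a_m^n$, and the leftover pieces $b_{m,j}^n$ then reassemble (over $m$) into the functions $-a_j^{n+1}$ of the next level. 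This telescoping is the engine of both permutations.

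First I would construct $\tau$ with $\sum c_{\tau(k)} \to \1$. Process the levels $n = 1, 2, \ldots$ in order. At the start of stage $n$ the functions already used sum (as I will verify) to $\tilde{A}_n = \1$ minus a small tail; concretely, at stage $n$ I first append all functions $a_m^n$ for $m \in M_n$ not yet used, bringing the partial sum to (close to) $\1$, and then append all $b_{m,j}^n$, $m \in M_n$, $j \in J_n$, which by \ref{sumb1} subtracts $\1$ — but by \ref{abdiff} the accumulated $b^n$'s equal $-\tilde{A}_{n+1} = -\1$ expressed as $\sum_j(-a_j^{n+1})$, so after stage $n$ we are back in the same configuration one level up. The point is that within a stage the partial sums never stray far from $\0$ or $\1$: by \ref{norma} each $a_m^n$ has norm $1/|M_n|$, and by \ref{disjoint} the ones added so far have disjoint supports, so the running partial sum during the ``$a$-phase'' interpolates monotonically between the two endpoints with controlled increments $1/|M_n|$; by \ref{normb} and \ref{depb} the $b$-phase increments are even smaller, $1/(|M_n||J_n|)$. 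Since $|M_n| \to \infty$ by \ref{ssize}, these fluctuations vanish, and one checks the partial sums converge to $\1$. For $\sigma$ with limit $\0$, I would run essentially the same scheme but offset by a half-stage: use the $b$'s of level $n$ to cancel the $a$'s of level $n$ (partial sum near $\0$), then introduce the $a$'s of level $n+1$ (which the leftover $b$'s have already half-cancelled), keeping the running sum near $\0$ throughout.

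The main obstacle, and the point needing care, is not the algebra of which blocks cancel — that is forced by \ref{absame}--\ref{sumb1} — but verifying that every partial sum, including those in the ``middle'' of a stage where only some $a_m^n$ or some $b_{m,j}^n$ have been added, stays within $o(1)$ of the target. This requires the disjoint-support normalization \ref{disjoint} together with the value restrictions \ref{intva}, \ref{intvb} and the dimension-dependence \ref{depa}, \ref{depb}: because $a_m^n$ depends only on $x_n$ and the supports are disjoint, a partial sum of the form $\sum_{m \in S} a_m^n + (\text{fully completed lower levels})$ differs from $\1$ only on the strip where $x_n$ avoids $\bigcup_{m \in S} V_m^n$, a set of measure $1 - |S|/|M_n| \le $ (number of missing pieces)$/|M_n|$, hence small once we only ever leave $o(|M_n|)$ pieces pending — which the ordering arranges. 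I would also need to confirm each $c_k$ is used exactly once: the stages partition $\bigcup_n (A_n \cup B_n)$, so this is immediate. Finally, conditional convergence (as opposed to absolute) is automatic since $\sum_n |M_n| \cdot \frac{1}{|M_n|}$ already diverges from the $a$'s alone.
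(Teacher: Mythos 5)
There is a genuine gap: the orderings you actually specify do not converge. Your stage $n$ appends the whole level $A_n$ and then the whole level $B_n$. Since completed stages cancel exactly ($\tilde{A}_k + \tilde{B}_k = \0$ by \ref{suma1} and \ref{sumb1}), the partial sum equals exactly $\1$ at the end of every $a$-phase and exactly $\0$ at the end of every $b$-phase, so it is at distance $1$ from any candidate limit infinitely often and the series diverges; the same holds for your ``half-stage offset'' version, which is the same sequence of functions bracketed differently. The observation that the increments within a phase are small ($1/|M_n|$, resp.\ $1/(|M_n||J_n|)$) does not help: small steps still carry the partial sum all the way from $\0$ to $\1$ and back within each stage, so the Cauchy condition fails. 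Likewise ``the partial sums never stray far from $\0$ or $\1$'' is not the statement you need; convergence requires the deviation from one fixed function to tend to zero.

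The missing idea is to interleave at the level of a single $a_m^n$, not a whole level. For the limit $\0$, place each $a_m^n$ and immediately after it the small family $\{b_{m,j}^n : j \in J_n\}$, which cancels it exactly by \ref{absame}; every completed mini-block sums to $\0$, so at any moment the partial sum is just the currently open mini-block, of norm at most $1/|M_n| + |J_n|\cdot\frac{1}{|M_n||J_n|} = 2/|M_n| \to 0$ by \ref{norma}, \ref{normb}, \ref{ssize}. For the limit $\1$, first place $A_1$ (summing to $\1$ by \ref{suma1}), and follow each $a_m^n$ with $n>1$ by the column $\{b_{l,m}^{n-1} : l \in M_{n-1}\}$ from the \emph{previous} level, which cancels it by \ref{abdiff}; then the deviation from $\1$ is again only the open mini-block, of norm $O(1/|M_n|)$. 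With this finer blocking no appeal to \ref{disjoint}, \ref{intva}, \ref{intvb} is needed for the $L_1$ statement (they enter only when upgrading to $L_p$), and your bookkeeping that each function is used exactly once goes through unchanged.
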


\begin{proof} For $\sigma$ it is enough to order the functions $a_m^n$ lexicographically, i.e.
$a_m^n$ appears before $a_{m'}^{n'}$ iff $n < n'$ or $n = n'$ and $m < m'$, and then immediately
after each $a_m^n$ to put the whole set $\{b_{m,j}^n : j \in J_n\}$. Then the sum
of each block consisting of a single function $a_m^n$ and the functions $b_{m,j}^n$ following it
sums up to zero due to property \ref{absame}, so the norm of each partial sum is the norm of the
currently open block, which converges to zero due to properties \ref{norma}, \ref{normb} and
\ref{ssize}.

To get $\tau$ we order the functions $a_m^n$ in the same way, but we follow each function $a_m^n$
for $n>1$ by the set $\{b_{l,m}^{n-1} : l \in M_{n-1}\}$, the functions $a_m^1$ are not followed by
anything (as there are no functions $b_{m,j}^0$). Then the functions $a_m^1$ sum up to the constant
function  1 due to property \ref{suma1}. The following blocks again sum up to zero, this time due
to property \ref{abdiff}, so the norm of the difference between 1 and a particular partial sum is
equal to the norm of the currently open block, which again converges to zero due to properties
\ref{norma}, \ref{normb} and \ref{ssize}.\end{proof}

\begin{remark} \label{otnormsums} The series of functions from Proposition \ref{propsums} converge not only in the
$L_1$ norm, but also in any $L_p$ norm for any $p < \infty$.\end{remark}

\begin{proof} Again it is only a question of investigating the norm of any given block, as the sum
of the previous blocks is zero. Functions $a_m^n$ assume only values 0 and 1 and have disjoint
supports for a set $n$ from properties \ref{intva} and \ref{disjoint}. Functions $b_{m,j}^n$ for a
given $n$ have disjoint supports (this follows from properties \ref{multab} and \ref{disjoint}) and
assume values $0$ and $-1$ (from \ref{intvb}). Thus for any $f$ being a sum of any set of functions
$a_m^n$ and $b_{m,j}^n$ for a fixed $n$ (or $a_m^n$ and $b_{m,j}^{n-1}$ for a fixed $n$ in the case of $\tau$) we have $\|f\|_\infty \leq 1$. This implies for any $1 \leq
p < \infty$ $$\|f\|_p = (\int |f|^p)^{1/p} = (\int |f| \cdot |f|^{p-1})^{1/p} \leq (\|f\|_1 \cdot
\|f^{p-1}\|_{\infty})^{1/p} \leq \|f\|_1^{1/p} \cdot 1 = \|f\|_1^{1/p}.$$ Thus if the sum of the
series tended to zero in the $L_1$ norm with $n$ tending to infinity, it also tends to
zero in any $L_p$ norm for $p < \infty$.\end{proof}

\begin{prop} \label{intcons} If a reordering $c_{\sigma(k)}$ of a family $c_k$ having the Kadets
properties converges, it converges to a constant integer function.\end{prop}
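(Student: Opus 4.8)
\emph{Proof plan.} The plan is to prove two things in turn about the limit function $f:=\sum_k c_{\sigma(k)}$: that it is almost everywhere integer-valued, and that it is almost everywhere constant. Together these give the statement, and the constant must then itself be an integer.

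The integrality is the easy half. Every $a_m^n$ takes only the values $0$ and $1$ and every $b_{m,j}^n$ takes only the values $0$ and $-1$ (properties \ref{intva} and \ref{intvb}), so every partial sum $S_K:=\sum_{k=1}^{K}c_{\sigma(k)}$ is an integer-valued function on $Q$. Since $S_K\to f$ in $L_1(Q)$, I would pass to a subsequence converging almost everywhere; as a convergent sequence of integers is eventually constant, its limit is an integer, so $f(x)\in\Z$ for almost every $x$.

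For the constancy I would show that $f$ is measurable with respect to the tail $\sigma$-algebra of the coordinate sequence $x_1,x_2,\ldots$. Fix $n$ and let $\HH_n$ be the $\sigma$-algebra generated by $x_n,x_{n+1},\ldots$. By properties \ref{depa} and \ref{depb}, the only functions $c_k$ that are \emph{not} $\HH_n$-measurable are those of the form $a_m^p$ or $b_{m,j}^p$ with $p\le n-1$; there are finitely many of them, and — the key point — by properties \ref{suma1} and \ref{sumb1} their total sum is $\sum_{p=1}^{n-1}\bigl(\tilde{A_p}+\tilde{B_p}\bigr)=\sum_{p=1}^{n-1}(\1-\1)=\0$. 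Hence, once $K$ is large enough that all these "low" functions already appear among $c_{\sigma(1)},\ldots,c_{\sigma(K)}$, their combined contribution to $S_K$ vanishes, so $S_K$ coincides with the sum of the remaining, $\HH_n$-measurable, terms and is therefore itself $\HH_n$-measurable. Letting $K\to\infty$ shows $f$ is $\HH_n$-measurable, and since $n$ is arbitrary $f$ is measurable with respect to $\bigcap_n\HH_n$, which is $\P$-trivial by Kolmogorov's zero-one law because the coordinates are independent. Thus $f$ is almost everywhere constant.

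I expect the only delicate point to be the bookkeeping in this last step: one must verify precisely that the finitely many non-tail-measurable functions of each level block sum to zero and therefore drop out of the partial sums once they have all been used — which is exactly where the normalisations $\tilde{A_p}=\1$ and $\tilde{B_p}=-\1$ enter, and which makes the argument slightly more than a bare invocation of the zero-one law. Everything else (passing to an a.e.-convergent subsequence, closedness of $L_1(\HH_n)$ in $L_1(Q)$) is routine.
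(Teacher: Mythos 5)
Your proposal is correct and takes essentially the same approach as the paper: the key point in both is that the finitely many terms involving low coordinates have a constant total sum (in your grouping $\sum_{p<n}(\tilde{A_p}+\tilde{B_p})=\0$, in the paper's per-coordinate grouping $\tilde{A_l}+\tilde{B_l}+\tilde{B}_{l-1}=-\1$) and therefore drop out of all sufficiently late partial sums, while integrality follows from the partial sums being integer-valued. The only difference is that you make the paper's final step --- ``the limit depends on no coordinate, hence is constant'' --- explicit via the tail $\sigma$-algebra and Kolmogorov's zero-one law, which the paper treats as obvious.
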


\begin{proof} Due to properties \ref{depa} and \ref{depb} and the finiteness of the sets $M_n$ and
$J_n$ only finitely many of the functions $c_{\sigma(k)}$ depend on a given variable $x_l$ -
precisely the functions belonging to $A_l$, $B_l$ and $B_{l-1}$. Moreover the sum of all these
functions equals to the constant function $-1$ due to properties \ref{suma1} and \ref{sumb1}. Thus
for some integer $K_0$ the function $\sum_{k=1}^K c_{\sigma(k)}$ is constant with regard to $x_l$
for $K \geq K_0$, and thus the limit of the series also has to be constant with regard to $x_l$. As
this applied to an arbitrary $l$, the limit simply has to be constant.

As the functions $c_k$ are integer-valued (properties \ref{intva} and \ref{intvb}), their sums also
have to be integer-valued. Thus all the partial sums of the series are integer-valued, and so the
limit is also integer-valued, which ends the proof.\end{proof}

The next step will be to show that 0 and 1 are the only possible limits of a rearrangement of a
family of functions with the Kadets property. We shall set a fixed rearrangement $c_{\sigma(k)}$ of
a given Kadets family, and we shall assume that the sum $\sum_k c_{\sigma(k)}$ converges to some
constant integer $C \neq 1$ (we know $C = 1$ can be achieved, it remains to prove that under these
assumptions $C = 0$).

Take an arbitrary $\delta > 0$ and fix $K_0 = K_0(\delta)$ such that for any $K > K_0$,
\begin{equation} \big{|\big|}C - \sum_{k=1}^Kc_{\sigma(k)}\big{| \big|} \leq \delta
\label{limit}\end{equation} and for any $m > l > K_0$ the Cauchy condition holds, i.e.
\begin{equation} \big{|\big|}\sum_{k=l}^mc_{\sigma(k)}\big{|\big|} \leq \delta. \label{Cauchy}
\end{equation} In addition to the sets $A_n$ and $B_n$ introduced earlier we shall also consider
$V_n = \bigcup_{k=1}^n (A_k \cup B_k)$. Let $M$ be any integer such that \begin{equation}
c_{\sigma(k)} \in V_M \cup A_{M+1} \qquad \mbox{for any $k \leq K$.} \label{MOK} \end{equation} Let
$c_k^* = c_{\sigma(k)}$ if $c_{\sigma(k)} \in V_M \cup A_{M+1}$, 0 otherwise. Similarly let
$\bar{c}_k = c_{\sigma(k)}$ if $c_{\sigma(k)} \in B_{M+1}$, 0 otherwise. By $c^*$ we shall denote
$\sum_{k=K_0+1}^\infty c_k^*$, while by $c$ we shall denote $\sum_{k=1}^{K_0} c_{\sigma(k)}$. The
sum $c + c^*$ is equal to $\tilde{V}_M + \tilde{A}_{M+1} = 0 + 1 = 1$. Hence $||c^*|| = ||1 - c||
\geq ||1 - C|| - ||C - c|| \geq 1 - \delta$. Let $k_0 = K_0$ and \begin{equation} k_{j+1} = \min
\big\{k : \frac{1}{4} - \frac{5\delta}{4} \leq\bn \sum_{i=k_j + 1}^k c_k^* \bn \leq \frac{1}{4} -
\frac{\delta}{4}\big\}.\label{defni}\end{equation}

The indices $k_j$ are well defined for $j$ from 1 to 4 because the total norm of the sum $c^*$ is
at least $1 - \delta$ and each single $c_k^*$ has norm $\leq \delta$ due to the Cauchy condition
(\ref{Cauchy}). For $j = 0,1,2,3$ define the following functions: $$c_{j+1}^{**} = \sum_{k=k_j +
1}^{k_{j+1}} c_k^*, \qquad \bar{\bar{c}}_{j+1} = \sum_{k=k_j + 1}^{k_{j+1}} \bar{c}_k, \qquad
\hat{c}_{j+1} = \sum_{k=k_j + 1}^{k_{j+1}} c_{\sigma(k)},$$ and for $j = 1,2,3,4$ set $r_j =
\hat{c}_j - \bar{c}_j - c^{**}_j$.

In plain words this means that we divide the functions $c_k$ for $k_j < k \leq k_{j+1}$ into three sets - those from $A_n$ for $n \leq M+1$ or $B_n$ for $n \leq M$ (these add up to $c_j^{**}$), those from $B_{M+1}$ (these add up to $\bar{\bar{c}}_j$) and the rest (these add up to $r_j$). We will show that the functions from $B_{M+1}$ are placed in $c_k$ in similar proportions as the functions from $V_M \cup A_{M+1}$ --- if, say, about a half of the functions from $V_M \cup A_{M+1}$ appeared in $c_k$ (that happens at $k_2$) then about a half of the functions from $B_{M+1}$  must have appeared, too.

We shall need to estimate the norm of two sums, which we would like to be negligible: $||r_j||$ and $||\sum_{k=k_4+1}^\infty c_k^*||$. We know that the sum of all $c_k$ up to $k_j$ is negligible, thus if the high-$n$ functions ($r_j$) are negligible, the functions from $V_M \cup A_{M+1}$ and $B_{M+1}$ have to approximately cancel each other out. This motivates the following proposition:

\begin{prop} For a Kadets family of functions $c_k$, its rearrangement $c_{\sigma(k)}$ converging
to some $C \neq 1$, an arbitrary $\delta$ and an arbitrary $M > K_0(\delta)$ as above, with the
notation as above we have $\sum_{j=1}^4 ||r_j|| \leq 18\delta$.\label{rj} \end{prop}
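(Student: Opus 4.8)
The plan is to bound each $\|r_j\|$ separately and then sum. Fix $j\in\{1,2,3,4\}$. By definition $r_j = \hat c_j - \bar{\bar c}_j - c_j^{**}$, i.e. $r_j$ collects exactly those functions $c_{\sigma(k)}$ with $k_{j-1}<k\le k_j$ that lie \emph{neither} in $V_M\cup A_{M+1}$ \emph{nor} in $B_{M+1}$ --- in other words the "high-$n$" functions from $A_n\cup B_n$ with $n\ge M+2$, together with functions from $B_{M+1}$ that somehow were excluded... (here one must be careful: $\bar c_k$ already captures all of $B_{M+1}$, so $r_j$ is precisely the sum of the functions $c_{\sigma(k)}$ in that index block belonging to $\bigcup_{n\ge M+2}(A_n\cup B_n)$, i.e. the functions "outside $V_{M+1}$"). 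The key point is that the supports of the functions contributing to $r_j$, the supports of those contributing to $c_j^{**}+\hat c$-parts, etc., interact in a controlled way through the variable $x_{M+1}$ and $x_{M+2}$.

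**Key steps.** First I would set up the telescoping identity: since $\hat c_1+\cdots+\hat c_j = \sum_{k=k_0+1}^{k_j} c_{\sigma(k)} = \big(\sum_{k=1}^{k_j}c_{\sigma(k)}\big) - c$, and by the Cauchy condition \eqref{Cauchy} together with \eqref{limit} each such partial tail is within $O(\delta)$ of $C-c$; hence $\|\hat c_1+\cdots+\hat c_j\|$ is controlled, and more usefully $\|\hat c_j\| = \|\sum_{k=k_{j-1}+1}^{k_j}c_{\sigma(k)}\|\le 2\delta$ by \eqref{Cauchy} directly. Second, $\|c_j^{**}\|$ is pinned down by the definition \eqref{defni} of $k_j$: it lies between $\tfrac14-\tfrac{5\delta}{4}$ and $\tfrac14-\tfrac{\delta}{4}$ (with the extra $\le\delta$ slack from the last term $c^*_{k_j}$). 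So the only genuinely unknown quantity is $\|\bar{\bar c}_j\|$, and from $r_j=\hat c_j-\bar{\bar c}_j-c_j^{**}$ we get $\|r_j\|\le \|\hat c_j\|+\|\bar{\bar c}_j\|+\|c_j^{**}\|$, which is useless unless we can show $\bar{\bar c}_j$ and $c_j^{**}$ nearly cancel. Third --- and this is the crux --- I would exploit the structural relation \eqref{abdiff}: the functions in $B_{M+1}$ summing to $\bar{\bar c}_j$ and the functions in $A_{M+1}$ (a sub-collection of those summing to $c_j^{**}$) are linked because $\tilde B_{M+1}=-\1$ and each $a_j^{M+2}=-\sum_{m}b_{m,j}^{M+1}$. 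Using disjointness of supports in the $x_{M+1}$ variable (property \eqref{disjoint}) and the fact that $b_{m,j}^{M+1}=-a_m^{M+1}a_j^{M+2}$, one shows that integrating out $x_{M+2}$ essentially forces $\bar{\bar c}_j \approx -(\text{the }A_{M+1}\text{-part of }c_j^{**})$ up to an error governed by $\|r_j\|$ itself (the high-$n$ functions that "break" the cancellation) plus the norm of the running tail.

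**Main obstacle.** The hard part will be Step 3: making precise the claim that "the proportion of $B_{M+1}$-mass that has appeared by step $k_j$ matches the proportion of $(V_M\cup A_{M+1})$-mass," i.e. turning the structural identities \eqref{absame}–\eqref{abdiff} into a quantitative estimate. The natural route is: apply the hypothesis that $\sum c_{\sigma(k)}$ converges to $C$ and is constant/integer-valued (Proposition \ref{intcons}) to the partial sum up to $k_j$, project onto the variables $x_{M+1},x_{M+2}$ (on which only functions in $A_{M+1},B_{M+1},A_{M+2},B_{M+2},B_{M}$ depend), and observe that the only functions in that projection not already accounted for in $c^*_k$ or $\bar c_k$ are the ones making up $r_j$ — so the defect from exact cancellation of $\bar{\bar c}_j$ against the $A_{M+1}$-portion is at most $\|r_j\| + 2\delta + (\text{earlier }r_i\text{ terms})$. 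Collecting the four inequalities $\|r_j\| \le (\text{const})\cdot\delta + (\text{sum of other }\|r_i\|\text{ with small coefficients})$ and solving the resulting linear system yields $\sum_{j=1}^4\|r_j\|\le 18\delta$. I would expect the bookkeeping of exactly which functions fall into $r_j$ versus $c_j^{**}$ versus $\bar{\bar c}_j$ at the boundary indices $k_j$, and tracking the accumulated $\delta$'s through the four-step induction, to be where all the care is needed; the final constant $18$ will come out of that arithmetic.
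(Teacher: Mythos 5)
There is a genuine gap at what you yourself call the crux (your Step 3). The mechanism you propose --- using the identity (\ref{abdiff}) and ``integrating out $x_{M+2}$'' to force $\bar{\bar{c}}_j$ to nearly cancel the $A_{M+1}$-part of $c_j^{**}$, and then solving a linear system of per-$j$ inequalities $\|r_j\|\le C\delta+\varepsilon\sum_i\|r_i\|$ --- is never made quantitative, and it points in the wrong direction: at best it would bound a ``cancellation defect'' from above by $\|r_j\|+O(\delta)$, whereas what is needed is an inequality in which $\|r_j\|$ is dominated by quantities already known to be small. Nothing in your sketch prevents the contribution of $r_j$ from being hidden inside the cancellation between $c_j^{**}$ and $\bar{\bar{c}}_j$ in the small sum $\hat{c}_j$; the triangle inequality alone, which is all you actually use, cannot extract $\|r_j\|$.

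The paper's proof rests on two facts you never invoke. First, $c_j^{**}$ is integer-valued with $\|c_j^{**}\|\le\frac14$ (by (\ref{defni})), hence $|\supp c_j^{**}|\le\frac14$, and since $c_j^{**}$ and $r_j$ depend on disjoint sets of variables, Lemma \ref{l0} gives the \emph{lower} bound $\|c_j^{**}+r_j\|\ge\|c_j^{**}\|+\frac12\|r_j\|$; this is the step that makes $\|r_j\|$ visible despite possible cancellation, and it uses the product structure and integer-valuedness in an essential way. Second, there is a global budget rather than a per-$j$ system: all functions of $B_{M+1}$ are non-positive (properties (\ref{intvb}), (\ref{sumb1})), so $\sum_{j=1}^4\|\bar{\bar{c}}_j\|\le\|\tilde{B}_{M+1}\|=1$, while (\ref{defni}) forces $\sum_{j=1}^4\|c_j^{**}\|\ge1-5\delta$. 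Combining, via $\|\bar{\bar{c}}_j\|=\|\hat{c}_j-c_j^{**}-r_j\|\ge\|c_j^{**}+r_j\|-\|\hat{c}_j\|$ and the Cauchy bound $\|\hat{c}_j\|\le\delta$ from (\ref{Cauchy}), one gets $1\ge1-5\delta+\frac12\sum_j\|r_j\|-4\delta$, i.e.\ $\sum_j\|r_j\|\le18\delta$. No structural matching of which particular $b_{m,j}^{M+1}$ appear against which $a_m^{M+1}$ appear is needed (and none is available for an arbitrary rearrangement); only norms and supports are compared. Your bookkeeping of which functions fall into $c_j^{**}$, $\bar{\bar{c}}_j$ and $r_j$ is correct, but without the support lemma and the non-positivity budget the argument does not close.
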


\begin{proof} As $c_j^{**}$ is integer-valued (being a sum of some functions from a Kadets family), the
condition $||c_j^{**}|| \leq \frac{1}{4}$ implies $|\supp c_j^{**}| \leq \frac{1}{4}$. Thus we can use
lemma \ref{l0} (from the section "Auxiliary lemmas")
to get $$||c_j^{**} + r_j|| \geq ||c_j^{**}|| + (1 - 2|\supp c_j^{**}|) ||r_j|| = ||c_j^{**}||
+ \frac{1}{2}r_j.$$ Of course $||\hat{c}_j|| \leq \delta$ from the Cauchy condition (\ref{Cauchy}).
We thus have $$1 \geq \sum_{j=1}^4 ||\bar{\bar{c}}_j|| = \sum_{j=1}^4||\hat{c}_j - c_j^{**} - r_j||
\geq \sum_{j=1}^4||c^{**}_j + r_j|| - \sum_{j=1}^4 ||\hat{c}_j|| \geq $$ $$\geq
\sum_{j=1}^4(||c^{**}_j|| + \frac{1}{2}||r_j||) - 4\delta \geq 1 - 5\delta + \frac{1}{2}\sum_{j=1}^4
||r_j|| - 4\delta,$$ which gives us the sought estimate upon $||r_j||$, namely $\sum_{j=1}^4
||r_j|| \leq 18\delta$. In particular, of course, each $||r_j||$ is bounded by $18\delta$.
\end{proof}

\begin{cor} With the notation and assumptions as above, $||\bar{\bar{c}}_j + c^{**}_j|| \leq
19\delta$ \label{fbg}\end{cor}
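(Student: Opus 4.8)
The plan is to observe that this corollary is an immediate repackaging of Proposition \ref{rj} together with the Cauchy condition (\ref{Cauchy}), so there is no real work left to do. First I would record the bookkeeping identity behind the definition of $r_j$: by the very definitions of $c_k^*$, $\bar{c}_k$ and $r_j$, each function $c_{\sigma(k)}$ with $k_{j-1} < k \leq k_j$ lies in exactly one of three classes --- it belongs to $V_M \cup A_{M+1}$ (and then contributes to $c_j^{**}$), or to $B_{M+1}$ (and then contributes to $\bar{\bar{c}}_j$), or to neither (and then contributes to $r_j$). Summing over the block $k_{j-1} < k \leq k_j$ gives $\hat{c}_j = c_j^{**} + \bar{\bar{c}}_j + r_j$, equivalently $\bar{\bar{c}}_j + c_j^{**} = \hat{c}_j - r_j$.

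From here I would just apply the triangle inequality, $||\bar{\bar{c}}_j + c_j^{**}|| \leq ||\hat{c}_j|| + ||r_j||$, and bound the two terms separately. The first term is at most $\delta$: since $\hat{c}_j = \sum_{k=k_{j-1}+1}^{k_j} c_{\sigma(k)}$ with both $k_{j-1} \geq k_0 = K_0$ and $k_j > K_0$, the Cauchy condition (\ref{Cauchy}) applies directly. The second term is at most $18\delta$ by the last sentence of Proposition \ref{rj}, where it is noted that each individual $||r_j||$ is bounded by the total $\sum_{j=1}^{4} ||r_j|| \leq 18\delta$. Adding these gives $||\bar{\bar{c}}_j + c_j^{**}|| \leq \delta + 18\delta = 19\delta$, which is exactly the claim.

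There is essentially no obstacle: the only thing to be careful about is the elementary bookkeeping that $\hat{c}_j = c_j^{**} + \bar{\bar{c}}_j + r_j$ (i.e. that $\bar{\bar{c}}_j$ is the block-sum of the per-term pieces $\bar{c}_k$, matching the definition $r_j = \hat{c}_j - \bar{c}_j - c_j^{**}$ summed over the block), and that the block endpoints used for $\hat{c}_j$ both exceed $K_0$ so that (\ref{Cauchy}) is legitimately invoked. All the genuinely analytic content has already been extracted in Proposition \ref{rj}; this corollary merely rewrites that estimate in the form that will be convenient later.
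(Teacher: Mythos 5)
Your proof is correct and is essentially identical to the paper's: the paper likewise writes $\bar{\bar{c}}_j + c_j^{**} = \hat{c}_j - r_j$ and applies the triangle inequality with the Cauchy bound $\|\hat{c}_j\| \leq \delta$ and the bound $\|r_j\| \leq 18\delta$ from Proposition \ref{rj}. Your extra remarks on the bookkeeping identity and on the block endpoints exceeding $K_0$ just make explicit what the paper leaves implicit.
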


\begin{proof} $||\bar{\bar{c}}_j + c^{**}_j|| = ||\hat{c}_j - r_j|| \leq ||\hat{c}_j|| + ||r_j|| \leq \delta + 18\delta = 19\delta$.\end{proof}

\begin{prop} For a Kadets family of functions $c_k$, its rearrangement $c_{\sigma(k)}$ converging
to some $C \neq 1$, an arbitrary $\delta$ and an arbitrary $M > K_0(\delta)$ as above, with the
notation as above we have $||\sum_{k=k_4 + 1}^\infty c_k^*|| \leq 11\delta$. \label{resztowka}
\end{prop}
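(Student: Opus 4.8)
The plan is to express $S:=\sum_{k=k_4+1}^{\infty}c_k^*$ through quantities already under control. I would first look at the tail of the whole rearranged series past $k_4$: applying \ref{limit} with $K=k_4$ (allowed since $k_4>K_0$) gives $\|\sum_{k>k_4}c_{\sigma(k)}\| = \|C\1-\sum_{k\le k_4}c_{\sigma(k)}\|\le\delta$. Split this tail according to which family each function belongs to: the functions from $V_M\cup A_{M+1}$ contribute exactly $S$, the functions from $B_{M+1}$ contribute $T:=\sum_{k>k_4}\bar c_k$, and the remaining functions form a ``high-level'' piece $R$ built from $a_m^n$ and $b_{m,j}^n$ with $n\ge M+2$. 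Thus $S+T+R=\sum_{k>k_4}c_{\sigma(k)}$, so $\|S\|\le\delta+\|T\|+\|R\|$, and the whole problem reduces to showing that $\|T\|$ and $\|R\|$ are $O(\delta)$.

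For $\|T\|$: the functions of $B_{M+1}$ are pairwise almost-disjointly supported and take only the values $0$ and $-1$ (by \ref{multab}, \ref{intvb}, \ref{disjoint}), while $\bar c_k=0$ for $k\le K_0$ because by \ref{MOK} those $c_{\sigma(k)}$ lie in $V_M\cup A_{M+1}$; hence $\bar{\bar c}_1+\cdots+\bar{\bar c}_4+T=\tilde B_{M+1}=-\1$ is, up to null sets, a partition of $Q$, so $\sum_{j=1}^4\|\bar{\bar c}_j\|+\|T\|=1$. On the other hand $\|\bar{\bar c}_j\|\ge\|c_j^{**}\|-\|c_j^{**}+\bar{\bar c}_j\|\ge(\tfrac14-\tfrac{5\delta}{4})-\|\hat c_j\|-\|r_j\|$ (using $c_j^{**}+\bar{\bar c}_j=\hat c_j-r_j$, the definition of the $k_j$, and Corollary \ref{fbg}), and summing over $j$ with \ref{Cauchy} and Proposition \ref{rj} gives $\sum_j\|\bar{\bar c}_j\|\ge1-O(\delta)$, hence $\|T\|=O(\delta)$. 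For $\|R\|$: the functions of levels $\le M+1$, i.e. the finitely many members of $V_M\cup A_{M+1}\cup B_{M+1}$, sum to $\tilde V_M+\tilde A_{M+1}+\tilde B_{M+1}=\0$, so the level-$\ge M+2$ subseries (in the order $\sigma$) converges to $C\1$; since by \ref{MOK} none of its terms occurs in the first $K_0$ slots and, by the very definition of $r_j$, the terms occurring in slots $K_0+1,\dots,k_4$ sum to $\sum_{j=1}^4 r_j$, we get $R=C\1-\sum_{j=1}^4 r_j$, whence $\|R\|\le|C|+\sum_j\|r_j\|\le|C|+18\delta$. Altogether $\|S\|\le\delta+O(\delta)+|C|+18\delta$; in the case $C=0$ that is driving the whole section this is the claimed $O(\delta)$ bound (and for $C\neq0$ the same computation instead delivers $\|S+C\1\|=O(\delta)$, which one then combines with $\|c^*\|\ge|1-C|-O(\delta)$ and $c^*=\sum_j c_j^{**}+S$ to pin $C$ down).

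The main obstacle is not conceptual: all the structure is already packaged in Proposition \ref{rj} and Corollary \ref{fbg}, whose content is that, $B_{M+1}$ being disjointly supported, the fact that each block $c_j^{**}$ carries norm $\approx\tfrac14$ forces almost all of $B_{M+1}$ — and hence, through \ref{sumb1} and $c+c^*=\1$, almost all of $V_M\cup A_{M+1}$ — to be consumed before slot $k_4$, so that only $O(\delta)$ of norm can be left in $S$. The actual work is the numerical bookkeeping: to land on the explicit constant $11$ rather than a larger one the estimates above must be run economically (not through the crudest chain of triangle inequalities), keeping careful track of the signs of the cancellations among $S$, $T$, the high-level tail $R$, and the finite initial block $c$; getting those constants tight is the delicate part.
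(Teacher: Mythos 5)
Your decomposition of the tail past $k_4$ into $S+T+R$ is a genuinely different route from the paper's, and the two identities it rests on ($\sum_{j=1}^4\|\bar{\bar c}_j\|+\|T\|=1$ and $R=C\1-\sum_{j=1}^4 r_j$) are correct; but there is a real gap exactly where the argument touches $R$: your bound is $\|R\|\le |C|+18\delta$, so the conclusion $\|S\|=O(\delta)$ is obtained only under the extra hypothesis $C=0$. The proposition, however, must hold for an arbitrary integer $C\neq 1$, because it is an input to Theorem \ref{MainWoz}, which is the only place where $C=0$ gets established --- so restricting to $C=0$ makes the argument circular. Your fallback for $C\neq 0$ does not close the gap: the constraints you list are mutually consistent for, say, $C=-1$, where they read $\|S-\1\|=O(\delta)$, $\|c^*\|\ge 2-\delta$, and $c^*=\sum_j c_j^{**}+S$ with $\sum_j\|c_j^{**}\|\le 1-\delta$; these are all simultaneously satisfiable with $\|S\|\approx 1$, which is precisely the situation the proposition is meant to exclude, and nothing in them pins $C$ down. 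Ruling out such $C$ at this stage would essentially amount to reproving Theorem \ref{MainWoz}, which in turn uses the present proposition.

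The paper's proof avoids the limit value $C$ altogether by never summing the high-level functions over an infinite range: it supposes $\|\sum_{k=k_4+1}^{k'}c_k^*\|>11\delta$ for some $k'$, takes the first $k_5$ with $11\delta<\|\sum_{k=k_4+1}^{k_5}c_k^*\|\le 12\delta$, and applies the block argument (Lemma \ref{l0} together with the Cauchy condition (\ref{Cauchy}) for the \emph{finite} block $(k_4,k_5]$) to get $\|\bar{\bar c}_5\|\ge\|c_5^{**}\|+(1-24\delta)\|r_5\|-\|\hat c_5\|>10\delta$; since the four earlier blocks already carry $\ge 1-9\delta$ of the total $B_{M+1}$ mass, which is exactly $1$, this is a contradiction. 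If you want to keep your tail decomposition, you must replace the infinite-tail treatment of $R$ by some such finite-block use of (\ref{Cauchy}), which is what eliminates $C$ from the estimate. Note also that even in the case $C=0$ your chain yields roughly $28\delta$--$46\delta$ rather than $11\delta$; this is secondary (any fixed multiple of $\delta$ would do downstream), but the constant $11$ in the paper comes from the stopping choice $11\delta<\cdots\le 12\delta$, not from tighter bookkeeping of estimates of your type.
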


\begin{proof} We have $$||\bar{\bar{c}}_j|| = ||\hat{c}_j - c_j^{**} - r_j|| \geq ||c_j^{**} + r_j|| - ||\hat{c}_j|| \geq||c_j^{**}|| + \frac{1}{2}||r_j|| -
||\hat{c}_j|| \geq ||c_j^{**}|| - \delta \geq \frac{1}{4} - \frac{9\delta}{4}.$$ Take any index
$k' > k_4$. If the norm $||\sum_{k = k_4 + 1}^{k'} c_k^*||$ were greater then $11\delta$, then there would exist some $k_5 \in (k_4, k']$ such that $12\delta \geq ||\sum_{k = k_4 + 1}^{k_5} c_k^*|| > 11\delta$. Then
by a similar argument ($||\bar{\bar{c}}_5|| \geq ||c_5^{**}|| + (1 - 24\delta)||r_5|| -
||\hat{c}_5|| \geq 11\delta - \delta$) the norm of $\sum_{k=k_4 + 1}^{k_5} \bar{c}_k$ would be larger then $10\delta$ --- but
all the functions $\bar{c}_k$ are negative, so $||\sum \bar{c}_k|| = \sum ||\bar{c}_k||$, which in this case gives $1 \geq ||\sum_{k=k_0}^{k_5} \bar{c}_k|| = \sum_{j=1}^{4} ||\bar{c}_k|| +
||\sum_{k=k_4 + 1}^{k_5}\bar{c}_k|| > 1 - 9\delta + 10\delta$, a contradiction. Thus the norm
$||\sum_{k=k_4 + 1}^\infty c_k^*||$ has to be no greater than $11 \delta$ (the sum is convergent,
as it is in fact the sum of a finite number of functions, all coming from $V_{M+1}$). Let us denote
this sum by $c_5^{**}$.\end{proof}

Now we can prove the main theorem of Wo\`{z}niakowski's work:

\begin{thm} \label{MainWoz} For a Kadets family of functions $c_k$ and some rearrangement
$c_{\sigma(k)}$ converging to $C \neq 1$ we have $|C - \frac{1}{2}| \leq \frac{1}{2}$, which (due
to lemma \ref{intcons}) implies $C = 0$.\end{thm}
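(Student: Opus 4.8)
The plan is to locate $C$ by estimating, in the $L_1(Q)$ norm, the partial sum $S_4:=\sum_{k=1}^{k_4}c_{\sigma(k)}$ through the index $k_4$. I will show $\|S_4\|\le O(\delta)$; on the other hand $k_4>K_0$, so by (\ref{limit}) we have $\|C-S_4\|\le\delta$, whence $|C|=\|C\|_{L_1(Q)}\le O(\delta)$. Since $\delta>0$ is arbitrary and $C$ is a fixed constant (Proposition \ref{intcons}), this forces $C=0$, which in particular yields $|C-\tfrac12|\le\tfrac12$, and then integrality and $C\neq1$ confirm $C=0$. (So the estimate in fact gives a bit more than the stated bound; equivalently it shows $\|c\|\le O(\delta)$, as $c=\1-c^*$ and $S_4$ differs from $c$ only by the blocks $\hat c_j$, each of norm $\le\delta$.)

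\textbf{Step 1: a telescoping identity for $S_4$.} Using $c+c^*=\tilde V_M+\tilde A_{M+1}=\1$, the decomposition $c^*=\sum_{i=1}^4 c^{**}_i+c^{**}_5$, and $\hat c_j=c^{**}_j+\bar{\bar c}_j+r_j$, I compute
$$S_4=c+\sum_{j=1}^4\hat c_j=\Big(\1-\sum_{i=1}^4 c^{**}_i-c^{**}_5\Big)+\sum_{j=1}^4\big(c^{**}_j+\bar{\bar c}_j+r_j\big)=\1+\sum_{j=1}^4\bar{\bar c}_j\;-\;c^{**}_5\;+\;\sum_{j=1}^4 r_j.$$
By Proposition \ref{resztowka}, $\|c^{**}_5\|\le11\delta$, and by Proposition \ref{rj}, $\sum_{j=1}^4\|r_j\|\le18\delta$, so $\big\|S_4-\big(\1+\sum_{j=1}^4\bar{\bar c}_j\big)\big\|\le29\delta$.

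\textbf{Step 2: $\sum_{j=1}^4\bar{\bar c}_j$ is close to $-\1$.} The functions $\bar c_k$ single out exactly the members of $B_{M+1}$ among the $c_{\sigma(k)}$, and they vanish for $k\le K_0$: by the choice of $M$ in (\ref{MOK}) the first $K_0$ functions lie in $V_M\cup A_{M+1}$, which is disjoint from $B_{M+1}$. Hence $\sum_{k=K_0+1}^{\infty}\bar c_k=\tilde B_{M+1}=-\1$ by (\ref{sumb1}), and since each $\bar c_k\le0$ the norms add: $\sum_{k=K_0+1}^{\infty}\|\bar c_k\|=\|{-}\1\|=1$. In the proof of Proposition \ref{resztowka} it was shown that $\|\bar{\bar c}_j\|\ge\tfrac14-\tfrac{9\delta}{4}$ for $j=1,2,3,4$, so $\sum_{j=1}^4\|\bar{\bar c}_j\|\ge1-9\delta$, leaving $\sum_{k>k_4}\|\bar c_k\|\le9\delta$. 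Since $\sum_{j=1}^4\bar{\bar c}_j=\sum_{k=K_0+1}^{k_4}\bar c_k$ while $-\1=\sum_{k=K_0+1}^{\infty}\bar c_k$, we get $\1+\sum_{j=1}^4\bar{\bar c}_j=-\sum_{k>k_4}\bar c_k$, hence $\big\|\1+\sum_{j=1}^4\bar{\bar c}_j\big\|\le9\delta$.

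\textbf{Conclusion and main difficulty.} Combining the two steps, $\|S_4\|\le29\delta+9\delta=38\delta$, so $\|C\|\le\|C-S_4\|+\|S_4\|\le39\delta$; letting $\delta\to0$ gives $C=0$, in particular $|C-\tfrac12|\le\tfrac12$, and Proposition \ref{intcons} together with $C\neq1$ confirms $C=0$. The only delicate point is Step 2 --- that by the time we reach index $k_4$ essentially all of the negative mass of $B_{M+1}$ has been deposited. This is exactly what the cut-offs $k_1,\dots,k_4$ placed at the $\tfrac14$-level, together with Proposition \ref{resztowka}, were designed to guarantee (four blocks of norm about $\tfrac14$ exhaust the unit mass of $c^*$, and then Proposition \ref{resztowka} shows nothing substantial of $B_{M+1}$ remains), so no genuinely new obstacle arises; the rest is the bookkeeping of Step 1.
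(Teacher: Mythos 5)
Your proof is correct, and it reuses the paper's entire scaffolding (the cut-offs $k_1,\dots,k_4$, Propositions \ref{rj} and \ref{resztowka}, and the same algebraic decomposition $S_{k_4}=\1+\sum_j\bar{\bar c}_j+\sum_j r_j-c_5^{**}$), but it finishes differently. The paper's proof observes only that $\sum_j\bar{\bar c}_j$ takes values in $\{0,-1\}$, so $|\tfrac12+\sum_j\bar{\bar c}_j|\equiv\tfrac12$ pointwise; this yields $\|S_{k_4}-\tfrac12\|\le\tfrac12+29\delta$, hence $|C-\tfrac12|\le\tfrac12$, and one must then invoke integrality together with the hypothesis $C\neq1$ to land on $C=0$. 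You instead add a mass-accounting step: since every member of $B_{M+1}$ occurs after position $K_0$, the $\bar c_k$ are nonpositive so their norms add up to $\|\tilde B_{M+1}\|=1$, and the bound $\|\bar{\bar c}_j\|\ge\tfrac14-\tfrac{9\delta}{4}$ (already established inside the proof of Proposition \ref{resztowka}) leaves at most $9\delta$ of that mass after $k_4$; hence $\|\1+\sum_j\bar{\bar c}_j\|\le9\delta$ and $\|S_{k_4}\|\le38\delta$, giving $C=0$ directly. This is a genuine (and slightly sharper) variant of the endgame: it shows that by $k_4$ essentially all of $B_{M+1}$ has been deposited, rather than merely exploiting the two-valuedness of its partial sums. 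One caution so you do not overstate what you proved: the hypothesis $C\neq1$ is still doing real work in your argument, just earlier --- it is what gives $\|c^*\|\ge1-\delta$ and hence the very existence of $k_1,\dots,k_4$ (for the rearrangement converging to $\1$ the $B_{M+1}$ functions lag behind $A_{M+1}$ and no such cut-offs exist), so your conclusion is the same conditional statement as the theorem's, not an unconditional claim that every convergent rearrangement sums to $0$.
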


\begin{proof} Consider any $\delta$, and the partial sum $S = \sum_{k=1}^{k_4} c_{\sigma(k)}$ with
the notation as above. As $k_4 > K_0$, from assumption \ref{limit} we know that $||S - C|| \leq
\delta$, so it will suffice to estimate $||S - \frac{1}{2}||$. We have $$||S - \frac{1}{2}|| = \bn
c + \sum_{j=1}^4 c_j^{**} + \sum_{j=1}^4 \bar{\bar{c}}_j + \sum_{j=1}^4 r_j + c_5^{**} - c_5^{**} -
\frac{1}{2}\bn =$$ $$= \bn c + c^* - \frac{1}{2} + \sum_{j=1}^{4}\bar{\bar{c}}_j + \sum_{j=1}^4 r_j
- c_5^{**} \bn \leq \bn \frac{1}{2} + \sum_{j=1}^4 \bar{\bar{c}}_j \bn + \bn \sum_{j=1}^4 r_j \bn +
\bn c_5^{**} \bn.$$ The function $\sum_{j=1}^4 \bar{\bar{c}}_j$ is a sum of functions from
$B_{M+1}$, which means assumes only the values 0 and $-1$, thus $|\frac{1}{2} + \sum_{j=1}^4
\bar{\bar{c}}_j|$ is always equal to $\frac{1}{2}$. Inserting this and the bounds upon $r_j$ and
$c_5^{**}$ we get $$||S- \frac{1}{2}|| \leq \frac{1}{2} + 18\delta + 11\delta = \frac{1}{2} +
29\delta.$$ As $||S - C|| \leq \delta$ we get $||C - \frac{1}{2}|| \leq \frac{1}{2} + 30\delta$. As
$\delta$ was chosen arbitrarily, we get the thesis.\end{proof}

\begin{cor} \label{otnorm} The sum range of any Kadets family consists of two points, the constant
functions 0 and 1, in any $L_p$ norm for $1\leq p < \infty$\end{cor}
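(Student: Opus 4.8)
The plan is to combine the explicit rearrangements already constructed with the $L_1$ characterization just obtained, exploiting the fact that $Q$ carries a probability measure, so that $L_p$ convergence is stronger than $L_1$ convergence.

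First I would check that both points $\0$ and $\1$ lie in the $L_p$ sum range. The permutations $\sigma$ and $\tau$ produced in Proposition \ref{propsums} give rearrangements converging to $\0$ and to $\1$, and Remark \ref{otnormsums} already records that this convergence holds in every $L_p$ norm with $p < \infty$. Hence the $L_p$ sum range contains $\{0,1\}$ for all such $p$.

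Next I would show it contains nothing more. Suppose a rearrangement $c_{\sigma(k)}$ converges in the $L_p$ norm to some function $f$. Since the total measure of $Q$ is $1$, Jensen's inequality (equivalently Hölder with exponents $p$ and $p/(p-1)$) gives $\|g\|_1 \leq \|g\|_p$ for every measurable $g$ on $Q$; applying this to the tails $f - \sum_{k=1}^K c_{\sigma(k)}$ shows that $\sum c_{\sigma(k)}$ converges to $f$ in the $L_1$ norm as well. Then Proposition \ref{intcons} forces $f$ to be a constant integer function, and Theorem \ref{MainWoz} forces that constant to be $0$ (since $1$ is the only other possibility and $C \neq 1$ was shown to imply $C = 0$, while the case $C = 1$ is already covered). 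Therefore the $L_p$ sum range is contained in $\{0,1\}$.

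Combining the two inclusions yields that the sum range in the $L_p$ norm equals $\{0,1\}$ for every $1 \leq p < \infty$, which is exactly the claim. I do not expect a genuine obstacle: the whole content is the monotonicity of $L_p$ norms on a probability space, which reduces the statement to the $L_1$ case already established. The single point that deserves an explicit line is that an $L_p$-convergent rearrangement is automatically $L_1$-convergent \emph{to the same limit}, so that Proposition \ref{intcons} and Theorem \ref{MainWoz} apply verbatim.
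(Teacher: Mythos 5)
Your proposal is correct and follows essentially the same route as the paper: both show the two points are attained via Proposition \ref{propsums} and Remark \ref{otnormsums}, and both use the H\"older/Jensen inequality $\|g\|_1 \leq \|g\|_p$ on the probability space $Q$ to transfer any $L_p$-convergent rearrangement to an $L_1$-convergent one with the same limit, after which Proposition \ref{intcons} and Theorem \ref{MainWoz} restrict the limit to $\{\0,\1\}$. Your explicit remark that the $L_1$ limit coincides with the $L_p$ limit is exactly the point the paper's proof also relies on.
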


\begin{proof} From Proposition \ref{propsums} and Remark \ref{otnormsums} we know that the two
constant functions belong to the sum range. From the Proposition \ref{intcons} we know that all
functions in the sum range in the $L_1$ norm are constant integer functions, and from Theorem
\ref{MainWoz} we know that only the two functions $0$ and $1$ are eligible. If any permutation of
the series converged to some function $g$ in some $L_p$ norm, then $\|S_n - g\|_p$ would tend to
zero. But from the H\"{o}lder inequality we know that $\|S_n - g\|_p \geq \|S_n - g\|_1$ (as the
measure of the whole space is 1), which would imply that the series $S_n$ converges also in the
$L_1$ norm, contradicting Theorem \ref{MainWoz}.\end{proof}

\section{The 3-point series}\label{3p}

Denote by $Q_i = [0,1]^\omega, i=1,2,3$ the infinite dimensional cube, i.e., the product of a
countable number of unit segments equipped with the standard product probability measure. The
example will be constructed in $L_1(Q_1 \cup Q_2 \cup Q_3)$. In the whole paper $t =
(t_1,t_2,\ldots)$ will denote the variable on $Q_1$, $u = (u_1,u_2,\ldots)$ will denote the
variable on $Q_2$ and $v = (v_1,v_2,\ldots)$ will denote the variable on $Q_3$.

Our series will consist of functions of three kinds. The functions of the first kind are defined as
follows:
\begin{equation*}f_m^n(t) = \begin{cases}1 & \mbox{if $\frac{m-1}{n} < t_n < \frac{m}{n}$}\\
                                        0 & \mbox{otherwise.}\end{cases}\end{equation*}
\begin{equation*}f_m^n(u) = f_m^n(v) = 0\end{equation*}
for $n \in \N, m \in \{1,2,\ldots,n\}$.

The second kind of functions is defined on all three cubes:

\begin{eqnarray*}g_{m,j}^n(t) &=& \begin{cases} -1 & \mbox{if $\frac{m-1}{n} < t_n < \frac{m}{n}$ and
                                                    $\frac{j-1}{n+1} < t_{n+1} < \frac{j}{n+1}$} \\
                                               0 & \mbox{otherwise} \end{cases}\\
g_{m,j}^n(u) &=& \begin{cases} \frac{1}{n+1} & \mbox{if $\frac{m-1}{n} < u_n < \frac{m}{n}$}\\
                                                0             & \mbox{otherwise}
                                                \end{cases}\\
g_{m,j}^n(v) &=& \begin{cases} 1 & \mbox{if  $\frac{(m-1)(n+1) + j -1}{n(n+1)} < v_n <
\frac{(m-1)(n+1)+j}{n(n+1)}$} \\
                                                0 & \mbox{otherwise} \end{cases}\end{eqnarray*}
for $n \in \N, m \in \{1,2,\ldots,n\}, j \in \{1,2,\ldots,n+1\}$.

The functions of the third kind are defined on $Q_2$ and $Q_3$:
\begin{eqnarray*}
h_{m,j,k}^n(t) &=&0 \\
 h_{m,j.k}^n(u) &=& \begin{cases}-\frac{1}{(n+1)^2(n+2)} &\mbox{if $\frac{m-1}{n} < u_n <
                                                                                        \frac{m}{n}$}\\
                                                0    &\mbox{otherwise}\end{cases}\\
h_{m,j,k}^n(v) &=& \begin{cases} -1 & \mbox{if $\frac{(m-1)(n+1) + j - 1}{n(n+1)} < v_n <
\frac{(m-1)(n+1) + j}{n(n+1)}$ and $\frac{k - 1}{(n+1)(n+2)} < v_{n+1} < \frac{k}{(n+1)(n+2)}$}\\
    0 & \mbox{otherwise}\end{cases}\end{eqnarray*}
for $n \in \N, m \in \{1,2,\ldots,n\}, j \in \{1,2,\ldots,n+1\}, k \in \{1,2,\ldots, (n+1)(n+2)\}$.

These functions have properties we want to generalize. Suppose we have three families of indices:
$M_n$, $J_n$ and $K_n$, with $J_n = M_{n+1}$ and $K_n = M_{n+1} \times J_{n+1}$ (here $M_n =
\{1,2,\ldots,n\}$ and the mapping between $\{1,2,\ldots,n\} \times \{1,2,\ldots,n+1\}$ and
$\{1,2,\ldots,n(n+1)\}$ is given by $(m,j) \mapsto (m-1)(n+1) + j$). We have three families of
functions: the first kind $\{f_m^n : n\in \N, m\in M_n\}$, the second kind $\{g_{m,j}^n : n\in\N,
m\in M_n, j\in J_n\}$ and the third kind $\{h_{m,j,k}^n : n\in\N, m\in M_n, j\in J_n, k \in K_n\}$
defined on the union $Q_1 \cup Q_2 \cup Q_3$ of Hilbert cubes. The families $f$ and $g$ form a
Kadets family on $Q_1$, while the functions $h$ disappear on $Q_1$. On $Q_3$ the functions $g$ and
$h$ form a Kadets family (with $M_n \times J_n$ being the first index set and $K_n$ the second),
while functions $f$ disappear. The properties of the functions on $Q_2$ are different, as follows:

\begin{eqnarray}
 \sum_{m \in M_n} \sum_{j \in J_n} g_{m,j}^n &=& \1 \label{gconst2}\\
 \sum_{m \in M_n} \sum_{j \in J_n} \sum_{k \in K_n} h_{m,j,k}^n &=& -\1,\label{hconst2}\\
 g_{m,j}^n &=& -\sum_{k \in K_n} h_{m,j,k}^n, \label{hgsingle}\\
\sum_{m' \in M_{n+1}} g_{m',j'}^{n+1} &=& - \sum_{m\in M_n} \sum_{j\in J_n} \sum_{m' \in M_{n+1}}
h_{m,j,(m',j')}^n.\label{hglevel}\\ \sum_{j\in J_n} g_{m,j} && \mbox{assumes only values 0 and 1}
\label{gaddup} \\ \int_{Q_2} g_{m,j}^n &=& \int_{Q_3} g_{m,j}^n \label{gint} \\ \int_{Q_2}
h_{m,j,k}^n &=& \int_{Q_3} h_{m,j,k}^n \label{hint} \\ ||g_{m,j}^n|| &=& \frac{1}{|M_n \times J_n|}
\label{normg} \\ ||h_{m,j,k}^n|| &=& \frac{1}{|M_n \times J_n \times K_n|} \label{normh}
\\ &&\mbox{The functions $g_{m,j}^n$ and $h_{m,j,k}^n$ on $Q_2$ depend only on $u_n$} \label{dep2}
\end{eqnarray}

Such a family of functions will be called a {\em 3-Kadets family}. It is easy (although maybe a bit
tedious) to check that the family defined at the beginning of the section is a 3-Kadets family.

We shall denote by $F_n$ the set $\{f_m^n : m \in M_n \}$, by $G_n$ the set $\{g_{m,j}^n : m \in
M_n; j \in J_n \}$ and by $H_n$ the set $\{h_{m,j,k}^n : m \in M_n, j \in J_n; k\in K_n\}$. Also, by
$V_M$ we shall denote $\bigcup_{k=1}^M F_k \cup G_k \cup H_k$. Denote by $d_n$ any set enumeration
of the whole 3-Kadets family. We are investigating the possible limits of $\sum_{n=1}^\infty
d_{\sigma(n)}$ for all permutations $\sigma$ of $\N$.

If a given rearrangement $d_{\sigma(n)}$ of a 3-Kadets family converges, it converges on each of
the cubes separately. On $Q_1$ and $Q_3$ we have Kadets families of functions, so the series on
each of these cubes converges either to $\0$ or to $\1$ due to theorem \ref{MainWoz}. The new part
is the behaviour on $Q_2$. Same as in the first part of Proposition \ref{intcons} only finitely
many functions depend on a given variable $u_n$ -- the functions $g_{m,j}^n$ and $h_{m,j,k}^n$ --
and their sum is constant, equal to zero due to property (\ref{hgsingle}) applied to each $j$
separately. Thus again the limit of the series $\sum d_{\sigma(n)}$ on $Q_2$ has to be a constant
function.

As $\int_{Q_2} d_n = \int_{Q_3} d_n$ for any $d_n$ (it is 0 for functions of the first kind and
follows from properties \ref{gint} and \ref{hint} for the second and third kind), we get
$\int_{Q_2} \sum_{n=1}^N d_{\sigma(n)} = \int_{Q_3} \sum_{n=1}^N d_{\sigma(n)}$. As the integral is
a continuous functional on $L_1(Q_2)$ and $L_1(Q_3)$ we get that the integrals of the limits have
to be equal -- but we know that the limit of $\sum d_{\sigma(n)}$ on both $Q_2$ and $Q_3$ is a
constant function, so the equality of integrals implies the equality of the limits. Thus the limit
of the whole series is described by a pair of integers - the value on $Q_1$ and the value on $Q_3$.
Let us denote the limit function by $d_\infty$.

We are to show that it is possible to obtain exactly three different sums -- precisely we can
obtain $(\0,\0), (\1,\0)$ and $(\1,\1)$. To obtain any of these limits we first arrange the
functions $f$ and $g$ as by Proposition \ref{propsums} for a Kadets family on $Q_1$, and then after
each $g$ we put the $h$ functions as by Proposition \ref{propsums} for the cube $Q_3$. It remains
to be seen if we get convergence on $Q_2$.

In the case of $(\0,\0)$ after a given $f_m^n$ there appear the all functions $g_{m,j}^n$ and
$h_{m,j,k}^n$ with the same $m$ and $n$. The sum of all these functions on $Q_2$ is equal to $\0$
due to property (\ref{hgsingle}) for each $j$ separately. Thus the norm of the partial sum on $Q_2$
is equal to the norm of the functions appearing after the last $f$, and this tends to zero due to
properties \ref{normg}, \ref{normh} and \ref{ssize} (all the functions have the same index $m$, so
the sum of their norms is equal to $\frac{2}{|M_n|} \ra 0$).

In the case of $(\1,\0)$ after a given $f_m^n$ we get the functions $g_{l,m}^{n-1}$ and
$h_{l,m,k}^{n-1}$. The sum of all these functions on $Q_2$ is again $\0$ due to property
\ref{hgsingle}, this time applied to each $l$ separately. Again the norm of the difference between
the  partial sum and $(\1,\0)$ is the norm of the part after the last $f$, and that again tends to
0.

In the case of $(\1,\1)$ after a given $f_m^n$ we get the functions $g_{l,m}^{n-1}$ and
$h_{l',m',(l,m)}^{n-2}$. Their sum is $\0$ due to property \ref{hglevel} applied to them all. Again
the norm of the difference between the partial sum and $\1$ tends to 0.

Again it is easy to check that the convergence occurs not only in the $L_1$ norm, but also in any
$L_p$ norm for $p < \infty$ in the same way as in Remark \ref{otnormsums} --- on each of the cubes
the $L_\infty$ norm of the partial sums is bounded by 1.

One may wonder why the same arguments will not imply the convergence of the series arranged by rows
in $G_n$ and columns in $H_{n-1}$ to $(\0,\1)$. The answer is we lack the equivalent of property
\ref{hglevel} for this arrangement. To illustrate this let us look at the 3-Kadets family given at
the beginning of the section arranged in this natural way. The sum $\sum_{j=1}^{n+1}g_{m,j}^n$ on
$Q_2$ is equal to $1$ on $\frac{m-1}{n} < u_n < \frac{m}{n}$, while the sum of the appropriate
column of $H_{n-1}$, $\sum_{j=1}^{n+1}\sum_{m'=1}^{n-1}\sum_{j'=1}^nh_{m',j',(m-1)(n+1) + j}^{n-1}$
is equal to $-\frac{1}{n}$ on the whole cube $Q_2$. Thus the partial sums before each function of
the first kind do not disappear as they did in the previous three cases, and when half of these
functions from a given $F_n$ have appeared, the norm of the partial sum on $Q_2$ is $\frac{1}{2}$
regardless of $n$ -- thus this particular series does not converge. Of course we still have to
prove this is true for any rearrangement -- but this example shows the nature of the reason why
only three and not four possible limits exist.

\section{Auxiliary lemmas}

Before we begin the main part of this paper -- i.e. the proof that our series cannot converge to
$(\0,\1)$ -- we shall need three auxiliary lemmas:

\begin{lemma} \label{l0} (Lemma given without proof in \cite{Wozniakowski}) Let $(X,\mi)$ and
$(Y,\ni)$ be measure spaces with probability measures. Let $f(x,y)$ and $g(x,y)$ be functions in
$L_1(X\times Y)$, each of which depends on only one variable: $f(x,y) = \tilde{f}(x), g(x,y) =
\tilde{g}(y)$. Then $$||f+g|| \geq ||f|| + ||g||[1 - 2\mi(\supp \tilde{f})].$$\end{lemma}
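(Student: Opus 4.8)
The plan is to use Fubini's theorem to reduce the inequality to a one-dimensional statement on the fibers $\{x\}\times Y$, and then on each fiber to exploit the fact that $f$ is constant there. Write $S = \supp\tilde f \subseteq X$ and let $\alpha = \|g\| = \int_Y |\tilde g(y)|\,d\nu(y)$. By Fubini,
\[
\|f+g\| = \int_X \left( \int_Y |\tilde f(x) + \tilde g(y)|\, d\nu(y)\right) d\mi(x).
\]
For a fixed $x$, $\tilde f(x)$ is a constant $c = c(x)$, so the inner integral is $\int_Y |c + \tilde g(y)|\,d\nu(y)$. The key one-dimensional inequality is: for any constant $c$ and any integrable $\tilde g$ with $\int |\tilde g| = \alpha$,
\[
\int_Y |c + \tilde g(y)|\, d\nu(y) \geq |c| + \alpha\bigl(1 - 2\nu(\{\tilde g \neq 0\}\text{-ish})\bigr),
\]
but more precisely what we need fiberwise is simply $\int_Y |c+\tilde g|\,d\nu \geq |c| - \alpha$ when $c\neq 0$ (reverse triangle inequality, since $\nu(Y)=1$ gives $\int|c| = |c|$), and $\int_Y|c+\tilde g|\,d\nu = \alpha$ when $c = 0$.

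Carrying this out: split $X = S \cup (X\setminus S)$. On $X\setminus S$ we have $\tilde f(x) = 0$, so the inner integral equals $\int_Y |\tilde g(y)|\,d\nu(y) = \alpha$; integrating over $X\setminus S$ contributes $\alpha(1 - \mi(S))$. On $S$, use $\int_Y |\tilde f(x) + \tilde g(y)|\,d\nu(y) \geq \int_Y |\tilde f(x)|\,d\nu(y) - \int_Y |\tilde g(y)|\,d\nu(y) = |\tilde f(x)| - \alpha$; integrating over $S$ contributes at least $\int_S |\tilde f(x)|\,d\mi(x) - \alpha\,\mi(S) = \|f\| - \alpha\,\mi(S)$. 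Adding the two pieces,
\[
\|f+g\| \geq \bigl(\|f\| - \alpha\,\mi(S)\bigr) + \alpha\bigl(1 - \mi(S)\bigr) = \|f\| + \alpha\bigl(1 - 2\mi(S)\bigr),
\]
which is exactly the claimed bound since $\alpha = \|g\|$ and $S = \supp\tilde f$.

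The only subtlety — and the step I'd be most careful about — is the measurability of the inner integral as a function of $x$ (handled by Tonelli, since $|\tilde f(x) + \tilde g(y)|$ is nonnegative and jointly measurable) and the harmless fact that the fiber bound on $S$ is wasteful precisely where $|\tilde f(x)|$ is small relative to $\alpha$; this is fine because the lemma only claims a lower bound, and the wastefulness is already "paid for" by the $-2\alpha\mi(S)$ correction term. No delicate estimate is needed: the whole argument is the reverse triangle inequality applied fiberwise plus bookkeeping with Tonelli, so there is no real obstacle here — just the routine care of keeping the two regions $S$ and $X\setminus S$ straight.
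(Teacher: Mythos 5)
Your proof is correct and is essentially the paper's own argument: both split the domain according to $\supp\tilde f$, use the exact identity $|f+g|=|g|$ off the support, and apply the pointwise bound $|f+g|\ge|f|-|g|$ on it, with Fubini accounting for the factors $\mi(\supp\tilde f)$ and $1-\mi(\supp\tilde f)$. Your fiberwise phrasing via Tonelli is only a cosmetic repackaging of the paper's direct integration over $\supp\tilde f\times Y$ and its complement.
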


\begin{proof} $||f+g|| = \int_{X\times Y} |f+g| = \int_{\supp\tilde{f} \times Y} |f+g| + \int_{(X
\setminus \supp \tilde{f}) \times Y} |g| \geq \int_{\supp(\tilde{f})\times Y} |f| -
\int_{\supp(\tilde{f})\times Y} |g| + (1-\mi(\supp\tilde{f})) ||g|| = ||f|| - \mi(\supp
\tilde{f})||g|| +(1-\mi(\supp\tilde{f})) ||g|| = ||f|| + ||g||[1 - 2\mi(\supp
\tilde{f})].$\end{proof}

\begin{lemma} \label{l1} Let $A,B,C$ be arbitrary spaces equipped with probabilistic measures and let $X = A\times
B\times C$ be equipped with the standard product measure. Suppose $f,g$ are bounded functions defined
on $X$ of the form $f(a,b,c) = \tilde{f}(a,b) = \sum_{k=1}^N s_k \chi_{A_k \times B_k}$ and $g(a,b,c) = \tilde{g}(b,c) = \sum_{l=1}^N t_l \chi_{B_l \times C_l}$, and $\|f - g\| \leq \eps$. Then there exists a function $h(a,b,c) = \tilde{h}(b)$ such
that $\|h - g\| \leq 2\eps$ and $\|h - f\| \leq 2\eps$. Moreover if $f$ is integer-valued then $h$
can also be chosen to be integer-valued, and if for a family of sets $B_\alpha$ we have
$\forall_\alpha \forall_{b_1, b_2 \in B_\alpha} \forall_{a\in A} f(a,b_1,c) = f(a,b_2,c)$, then we
can choose a function $h$ constant on any set $B_\alpha$.\end{lemma}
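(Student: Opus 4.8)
The plan is to take the "obvious" candidate for $h$, namely the conditional expectation (averaging) of $f$ over the variable $a$, or equivalently of $g$ over the variable $c$, and show the two resulting functions of $b$ alone are close, hence either one works.

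First I would set $h_1(a,b,c) = \tilde h_1(b) := \int_A \tilde f(a,b)\,d\mu_A(a)$ and $h_2(a,b,c) = \tilde h_2(b) := \int_C \tilde g(b,c)\,d\mu_C(c)$; both are genuinely functions of $b$ only because $f$ does not depend on $c$ and $g$ does not depend on $a$. Since integration against a probability measure is an $L_1$-contraction (Jensen / Fubini: $\int_B |\int_A \tilde f(a,b)\,da - \int_A \tilde g(b,c)\,da|\,db \le \int_B \int_A |\tilde f(a,b) - g(a,b,c)|\,da\,db = \|f-g\|$, and similarly for $h_2$), I get $\|h_1 - f\| \le \|f-g\|\le\eps$? — careful: that bound is $\|h_1-g\|$-type. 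The clean statement is $\|h_1 - f\| = \int_{A\times B}|\int_A \tilde f(a',b)\,da' - \tilde f(a,b)|\,da\,db$, which is at most $\int_{A\times B}\int_A |\tilde f(a',b) - \tilde f(a,b)|\,da'\,da\,db$, and each inner integrand is $\le |\tilde f(a',b)-\tilde g(b,c_0)| + |\tilde g(b,c_0)-\tilde f(a,b)|$ after picking any fixed $c_0$ — integrating in $a',a$ gives $2\|f-g\|=2\eps$ once we integrate in $c_0$ too (replace the fixed $c_0$ by an average over $C$). That yields $\|h_1 - f\|\le 2\eps$; then $\|h_1 - g\| \le \|h_1-f\| + \|f-g\| \le 3\eps$, which is slightly worse than the claimed $2\eps$. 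So I would instead be more symmetric: set $h := h_1 = h_2$ after first noting $\|h_1 - h_2\| = \|\int_A \tilde f\,da - \int_C \tilde g\,dc\| \le \|f - g\| \le \eps$ by the contraction argument, and then $\|h_1 - f\| \le \eps$ requires a genuinely better estimate — the right choice is $h := $ the average over $a$ of $f$ on the support pattern, and one uses that $\|f - h_1\|_{L^1(A\times B)} \le \|f-g\|$ directly because $h_1$ is the best $L^1$-ish approximant... Actually the correct and simplest route is: $\|f - h_2\| = \|f - \int_C g\,dc\| = \|\int_C (f - g)\,dc\| \le \int_C \|f-g\|\,dc = \eps$ since $f$ is independent of $c$; symmetrically $\|g - h_1\| \le \eps$; and $\|h_1 - h_2\| \le \|h_1 - f\| + \|f - h_2\|$ needs $\|h_1 - f\|$, loop again. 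The resolution: $\|h_1 - f\| \le \|h_1 - g\| + \|g - f\|$ where $\|h_1 - g\| = \|\int_A(f-g)\,da\| \le \eps$, giving $\|h_1 - f\| \le 2\eps$ and $\|h_1 - g\|\le\eps\le 2\eps$. So taking $h := h_1$ works with the stated constant $2\eps$.

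For the integer-valued refinement, averaging destroys integrality, so instead of the mean I would use a thresholding/rounding of $h_1$: since $f$ is integer-valued and built from finitely many rectangles, for each fixed $b$ the function $a \mapsto \tilde f(a,b)$ takes finitely many integer values on a finite partition of $A$; define $\tilde h(b)$ to be that integer value of $\tilde f(\cdot, b)$ which occupies the largest $\mu_A$-measure (breaking ties arbitrarily, measurably in $b$). Then for each $b$, the set $\{a : \tilde f(a,b)\neq \tilde h(b)\}$ has measure $\le \tfrac12$, and on it $|\tilde f(a,b) - \tilde h(b)| \le 2|\tilde f(a,b) - c|$ for the majority value... more simply, $\mu_A\{a : \tilde f(a,b) \ne \tilde h(b)\} \le 2\,\mu_A\{a : \tilde f(a,b) \ne v\}$ for \emph{every} value $v$, because the $\tilde h(b)$-set is the smallest complement. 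Integrating $|\tilde f - \tilde h| \le (\mathrm{diam})\cdot \mathbf 1[\tilde f \ne \tilde h]$ won't directly give $\eps$; instead compare with $g$: $\|f - h\| = \int_B \int_A |\tilde f(a,b) - \tilde h(b)|$, and since $\tilde h(b)$ is a nearest-in-measure integer I can bound this by $2\int_B\int_A |\tilde f(a,b) - \tilde g(b,c_0)|$ for a well-chosen $c_0 = c_0(b)$ and then average over $c_0$, landing at $\le 2\|f-g\| = 2\eps$ for $\|f-h\|$ and $\le 3\eps$ — to hit exactly $2\eps$ one picks $c_0(b)$ minimizing $\int_A|\tilde f(a,b)-\tilde g(b,c_0)|$ and uses that this min is $\le \int_C\int_A|\cdots|\,dc$. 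This is the step I expect to be fiddly; the constant-on-$B_\alpha$ variant is immediate because if $f$ is constant in $b$ across each $B_\alpha$ then so is the majority value $\tilde h$.

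The main obstacle is therefore not the existence of $h$ (conditional expectation is the natural candidate) but pinning down the constants and handling the integer-valued case cleanly: there one cannot average, so one must argue that a measure-majority rounding of the $a$-average of $f$ is still $L^1$-close to both $f$ and $g$, which forces a careful use of the hypothesis $\|f-g\|\le\eps$ through an intermediate fixed slice in $c$, together with the finiteness of the rectangle decomposition to guarantee measurability of the majority-value selection. I would structure the write-up as: (1) define $h$ as the $a$-average of $f$, prove $\|h-g\|\le\eps$ via Fubini and the contraction property, deduce $\|h-f\|\le 2\eps$; (2) for the integer case, replace $h$ by the pointwise-in-$b$ measure-majority integer value of $\tilde f(\cdot,b)$, verify measurability from the finite partition, and redo the estimate comparing through $g$; (3) observe the $B_\alpha$-constancy is preserved by both constructions.
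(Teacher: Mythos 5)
Your first part (no integrality requirement) is correct, and it is a genuinely different route from the paper: you take $h$ to be the conditional expectation $\tilde h(b)=\int_A \tilde f(a,b)\,da$ and use the $L_1$-contraction property, which gives $\|h-g\|\le\eps$ and $\|h-f\|\le 2\eps$; the paper instead defines $\tilde h(b)$ as a best constant approximant of $\tilde f(\cdot,b)$ in $L_1(A)$, i.e.\ a value attaining $\inf_{x}\int_A|\tilde f(a,b)-x|\,da$, and bounds $\|h-f\|\le\|f-g\|$ by comparing the infimum with the competitor $x=\tilde g(b,c)$ and integrating over $c$. Averaging is fine for the plain statement, but note that the ``moreover'' clause is not decoration: in the application (the construction of $\tilde A$ in Theorem \ref{Main}) it is exactly the integer-valued, $B_\alpha$-constant $h$ that is needed, so that part of the lemma must actually be proved.

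That is where your proposal has a genuine gap. For the integer case you replace the average by the \emph{measure-majority} (mode) value of $\tilde f(\cdot,b)$, and you would need an inequality of the type $\int_A|\tilde f(a,b)-\tilde h(b)|\,da \le 2\int_A|\tilde f(a,b)-x|\,da$ for arbitrary competitors $x$ (in particular $x=\tilde g(b,c)$). The mode does minimize the \emph{measure} of the disagreement set, but it does not even approximately minimize the $L_1$ distance: take $\tilde f(\cdot,b)$ equal to $0$ on a set of measure $0.11$ and equal to nine distinct integers near a huge $N$ on nine sets of measure $0.099$ each; the mode is $0$, yet $\int_A|\tilde f-0|\approx 0.89\,N$ while $\int_A|\tilde f-N|\approx 0.11\,N$, so the ratio is about $8$, and it can be made arbitrarily large. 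Thus the comparison ``through a well-chosen $c_0$'' that you leave as the fiddly step cannot be closed for the mode, and your sketched constants ($2\eps$, $3\eps$) do not survive. The repair is precisely the paper's choice: define $\tilde h(b)$ as a minimizer of $x\mapsto\int_A|\tilde f(a,b)-x|\,da$ over the integers (for integer-valued $\tilde f(\cdot,b)$ this is a median and the restriction to $\Z$ costs nothing), after which $\int_A|\tilde f(a,b)-\tilde h(b)|\,da\le\int_A|\tilde f(a,b)-\tilde g(b,c)|\,da$ for every $c$, giving $\|h-f\|\le\eps$ and $\|h-g\|\le 2\eps$ in one line; measurability and constancy on the sets $B_\alpha$ follow from the finite rectangle decomposition exactly as in your write-up plan.
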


\begin{proof} For any given $b \in B$ we take $\tilde{h}(b)$ such that $$\int_A |\tilde{f}(a,b) -
\tilde{h}(b)| da = \inf_{x \in \R} \{\int_A |\tilde{f}(a,b) -x| da\}.$$ This is well defined, as
$f$ is bounded, and thus in fact the $\inf$ is taken over a bounded, and thus compact set. For such
an $h$ we have $$\|h - f\| = \int_X |f(a,b,c) - \tilde{h}(b)| = \int_C \int_B \int_A
|\tilde{f}(a,b) - \tilde{h}(b)| = \int_C \int_B \inf \{ \int_A |\tilde{f}(a,b) - x(b)|\} \leq $$ $$\leq \int_C \int_B
\int_A |\tilde{f}(a,b) - \tilde{g}(b,c)| \leq \int_C \int_B \int_A
|f(a,b,c) - g(a,b,c)| = \|f-g\| \leq \eps.$$ As $\|h-f\| \leq \eps$ and $\|f-g\| \leq \eps$, we
immediately have $\|g-h\| \leq 2\eps$. As for the additional assumptions, if $f$ and $g$ are
integer-valued, we can take the $\inf$ in the definition of $\tilde{h}$ to be taken only over
integers, with the same result. Regardless of that which option we choose, if $f$ is constant with
regard to $b$ on any $B_\alpha$, then from the definition $h$ also can be chosen to be constant on
that set.\end{proof}

\begin{lemma} \label{l2} Let $A,B$ be arbitrary spaces equipped with probabilistic measures and $X = A\times
B$ equipped with the standard product measure. Suppose $f,g,h$ are integer-valued functions defined
on $X$ fulfilling $f(a,b) = \tilde{f}(a)$ and $h(a,b)=\tilde{h}(b)$ for some $\tilde{f},
\tilde{h}$. Suppose too that the function $g$ assumes only two adjacent values (i.e. $k$ and $k+1$
for some $k$) . Finally suppose that $\|f + g + h\| < \delta < \frac{1}{9}$. Then either $f$ or $h$
is a constant function equal some integer $c$ on a set of measure $\geq 1 - 2\sqrt{\delta}$.
Furthermore the function satisfies $\|f-c\| < 3\sqrt{\delta}$ (or $\|h-c\| < 3\sqrt{\delta}$,
respectively).
\end{lemma}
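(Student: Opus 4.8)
The plan is to exploit integrality. Since $f+g+h$ is integer-valued, the product measure $\mu$ of $X$ satisfies $\mu\{f+g+h\neq0\}\le\|f+g+h\|<\delta$, and on the ``good set'' $E=\{f+g+h=0\}$ we have $f(a)+h(b)\in\{-k,-k-1\}$, because $g\in\{k,k+1\}$ there. Write $\mu_A,\mu_B$ for the marginal measures on $A,B$ and set $E_a=\{b:(a,b)\in E\}$. Fubini gives $\int_A(1-\mu_B(E_a))\,da=1-\mu(E)<\delta$, so by Chebyshev's inequality the set $A'=\{a:\mu_B(E_a)>1-\sqrt\delta\}$ has $\mu_A(A')>1-\sqrt\delta$. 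Since $\delta<\tfrac14$, for any $a_1,a_2\in A'$ the set $E_{a_1}\cap E_{a_2}$ has measure $>1-2\sqrt\delta>0$, and on it $h(b)$ must lie in both $\{-k-f(a_1),-k-1-f(a_1)\}$ and $\{-k-f(a_2),-k-1-f(a_2)\}$; two such two-element sets are disjoint unless $|f(a_1)-f(a_2)|\le1$. Hence $f$ takes on $A'$ either one value $c$ or two adjacent values $c,c+1$.

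This gives a dichotomy. If $f\equiv c$ on $A'$, then $\mu_A\{f=c\}\ge\mu_A(A')>1-2\sqrt\delta$ and $f$ is the required constant function. Otherwise $f$ attains both $c$ and $c+1$ on subsets of $A'$ of positive measure; picking $a_1,a_2\in A'$ with $f(a_1)=c$ and $f(a_2)=c+1$, on the set $E_{a_1}\cap E_{a_2}$ (of measure $>1-2\sqrt\delta$) we get $h(b)\in\{-k-c,-k-1-c\}\cap\{-k-1-c,-k-2-c\}=\{-k-1-c\}$, so $h$ equals the integer $-k-1-c$ off a set of measure $\le2\sqrt\delta$. In either case one of $f,h$ --- say $f$, the hypotheses being symmetric under exchanging $f\leftrightarrow h$ and $A\leftrightarrow B$ --- equals an integer $c$ outside a set of measure at most $2\sqrt\delta$.

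To upgrade this to $\|f-c\|<3\sqrt\delta$ I would apply Fubini a second time, using the pointwise inequality $|f(a)+g(a,b)+h(b)|\ge\operatorname{dist}\bigl(f(a)+h(b)+k,\{0,-1\}\bigr)$ coming from $g\in\{k,k+1\}$. Integrating this over $\{f=c\}\times B$, whose measure exceeds $1-2\sqrt\delta>\tfrac13$, shows $\int_B\operatorname{dist}(h(b),\{r,r-1\})\,db<3\delta$ with $r=-k-c$; since $h$ is integer-valued this forces $h(b)\in\{r,r-1\}$ off a set of $B$-measure $<3\delta$. On that large set the possible values of $f(a)+g(a,b)+h(b)$ all lie in $\{f(a)-c-1,f(a)-c,f(a)-c+1\}$, hence $|f(a)+g(a,b)+h(b)|\ge(|f(a)-c|-1)^+$, and integrating over $X$ yields $\int_A(|f(a)-c|-1)^+\,da<\delta/(1-3\delta)<\tfrac32\delta$. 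As $(|f-c|-1)^+=|f-c|-\mathbf 1_{\{f\neq c\}}$, adding back $\mu_A\{f\neq c\}\le2\sqrt\delta$ gives $\|f-c\|<\tfrac32\delta+2\sqrt\delta<3\sqrt\delta$, the final inequality using $\delta<\tfrac19$ (which is also what makes $\mu_A\{f=c\}>\tfrac13$ above). The same computation with the roles of $f$ and $h$ reversed handles the case where $h$ is the constant function.

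The main obstacle is this last step: the measure bound $\mu_A\{f\neq c\}\le2\sqrt\delta$ alone says nothing about how large $f$ can be on that exceptional set, so one genuinely has to feed back the fact that $h$ is itself essentially two-valued in order to see that large excursions of $f$ are penalised proportionally to their size in $\|f+g+h\|$. The remaining work is routine bookkeeping of the $\delta$- and $\sqrt\delta$-order error terms, which stays consistent precisely because $\delta<\tfrac19$.
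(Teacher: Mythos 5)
Your proof is correct, and the route to the dichotomy is genuinely different from the paper's. For the claim that one of $f,h$ is constant on a set of measure $\geq 1-2\sqrt\delta$, the paper argues by contradiction via the monotone level-set families $F_n=\tilde f^{-1}((-\infty,n])$ and $H_n=\tilde h^{-1}((-\infty,n])$: if neither function were nearly constant, some thresholds would give $|F_{n_f}|,|H_{n_h}|\in[\sqrt\delta,1-\sqrt\delta]$, and then on one of the rectangles $F_{n_f}\times H_{n_h}$ or $(A\setminus F_{n_f})\times(B\setminus H_{n_h})$ (each of measure at least $\delta$) the integrality of $f+h$ and the two adjacent values of $g$ force $|f+g+h|\geq 1$, contradicting $\|f+g+h\|<\delta$. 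You instead work constructively with the zero set $E=\{f+g+h=0\}$, use Fubini and Markov to get the set $A'$ of points with sections of measure $>1-\sqrt\delta$, and deduce from pairwise intersections of sections that $f$ takes at most two adjacent values on $A'$; in the genuinely two-valued case this pins $h$ down to a single integer on a large set. Your version identifies explicitly which function is near-constant and what the constant is, at the cost of a somewhat longer argument; the paper's is a shorter pure contradiction. For the quantitative bound $\|f-c\|<3\sqrt\delta$ both arguments share the same key idea --- first show the \emph{other} function essentially takes only two adjacent values, then feed this back to control the excursions of the near-constant one --- but the bookkeeping differs: you integrate $(|f-c|-1)^+$ over $A\times\{h\in\{r,r-1\}\}$ and obtain $\tfrac{3}{2}\delta+2\sqrt\delta$, while the paper substitutes $-c$ for $f+g$ on $A'\times(B\setminus h^{-1}(c))$ and obtains $\tfrac{\delta}{1-2\sqrt\delta}+2\sqrt\delta$; both are below $3\sqrt\delta$ for $\delta<\tfrac19$. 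The only cosmetic point is that choices like ``pick $a_1,a_2\in A'$ with $f(a_1)=c$, $f(a_2)=c+1$'' should be read as choices of positive-measure subsets of $A'$ (essential values), which is how you in fact use them, so nothing is lost.
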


\begin{proof} The sets $F_n = \tilde{f}^{-1}((-\infty,n])$ and $H_n = \tilde{h}^{-1}((-\infty,n])$
form two increasing families, the sum of each is the whole space $X$ and the intersection of each
is empty. The measures $|F_n|$ thus form an ascending sequence with elements arbitrarily close to 0
when $n \ra -\infty$ and arbitrarily close to 1 when $n\ra \infty$. As $F_n \setminus F_{n-1} =
\tilde{f}^{-1}(n)$, if $\tilde{f}$ is not constant on any set of measure $\geq 1 - 2\sqrt{\delta}$,
then at least one element of the sequence $|F_n|$, say $F_{n_f}$, has to fall into the interval
$[\sqrt{\delta}, 1-\sqrt{\delta}]$. Similarly if $\tilde{h}$ is constant on no set of measure $\geq
1 - 2\sqrt{\delta}$, then for some $n_h$ we have $\sqrt{\delta} \geq |H_{n_h}| \geq 1 -
\sqrt{\delta}$. Then on the set $X_1 = F_{n_f} \times H_{n_h}$ we have $f(a,b) + h(a,b) \leq n_h +
n_f$, while on $X_2 = (A \setminus F_{n_f}) \times (B \setminus H_{n_h})$ we have $f(a,b) + h(a,b)
\geq n_h + n_f + 2$. As $g$ assumes two adjacent values, it is either $ \leq -(n_h + n_f + 1)$ or
$\geq -(n_h + n_f + 1)$ on the whole space $X$. Thus on one of the sets $X_1, X_2$ we have $|f + g
+ h| \geq 1$, call it $X_i$. As both $X_1$ and $X_2$ are products of two sets of measure $\geq
\sqrt\delta$, we have $\|f + g + h\| = \int_X |f(a,b) + g(a,b) + h(a,b)| \geq \int_{X_i} |f(a,b) +
g(a,b) + h(a,b)| \geq |X_i| \geq \delta$, which contradicts the assumptions of the lemma.

Thus one of the functions has to be constant on a large set. Without the loss of generality we may
assume it is $h$, and that it is equal to some integer $c$. Let us examine the function $f$, taking
into account that all the functions are integer-valued, and thus if their sum is non-zero, it is at
least one : $$\delta > \|f + g+h\| \geq \|f+g+c\|_{A\times h^{-1}(c)} \geq |\{\tilde{f}(a) \not \in
\{-k-c, -k-c-1\}\} \times h^{-1}(c)| =$$ $$=|\{\tilde{f}(a) \not \in \{-k-c, -k-c-1\}\}| \cdot
(1-2\sqrt{\delta}),$$ which implies $\tilde{f}(a) \in \{-k-c,-k-c-1\}$ on a set of measure at least
$1 - \frac{\delta}{1-2\sqrt{\delta}}$. Denote this set by $A'$. Now we return to the function $h$:
$$\|h - c\|_X \leq \frac{1}{1-2\sqrt{\delta}}\|h - c\|_{A'\times B} = \frac{1}{1 -
2\sqrt{\delta}}\|h-c\|_{A'\times (B \setminus h^{-1}(c))}.$$ On the set $A'$ the function $f+g + c$
assumes values of absolute value $\leq 1$, so by substituting $f+g$ for $-c$ we shall decrease the
norm at most by $$1 \cdot |A' \times (B\setminus h^{-1}(c))| \leq (1 -
\frac{\delta}{1-2\sqrt{\delta}})(2\sqrt{\delta}) \leq 2\sqrt{\delta},$$ thus giving the inequality
$$\|h-c\|_X \leq \frac{1}{1-2\sqrt{\delta}}\|h+f+g\|_{A' \times (B\setminus h^{-1}(c))} +
2\sqrt{\delta} \leq \frac{1}{1-2\sqrt{\delta}}\|f+g+h\|_X + 2\sqrt{\delta} \leq
\frac{\delta}{1-2\sqrt{\delta}} + 2\sqrt{\delta}.$$ As $\delta \leq \frac{1}{9}$, we have
$\frac{\delta}{1-2\sqrt{\delta}} \leq \sqrt{\delta}$, and thus $||h-c|| \leq 3\sqrt{\delta}$.

\end{proof}

\section{The fourth point}\label{4p}

Now we can begin to prove the main theorem of the paper:

\begin{thm} The function $d_\infty = (\0,\1)$ does not belong to the sum range of any 3-Kadets family
series.\label{Main}\end{thm}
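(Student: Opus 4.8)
The strategy is a proof by contradiction: assume some rearrangement $d_{\sigma(n)}$ of a 3-Kadets family converges to $(\0,\1)$, meaning the sum on $Q_1$ is the constant $\0$ and the sum on $Q_3$ is the constant $\1$. The key asymmetry to exploit is that the functions $f$ of the first kind live only on $Q_1$, the functions $h$ of the third kind live only on $Q_2$ and $Q_3$, and the $g$ functions glue everything together. Since on $Q_1$ the family $\{f,g\}$ is a Kadets family converging to $\0$, I can apply the machinery of Section \ref{Wozza} — in particular the ``block'' structure from Proposition \ref{propsums} and the estimates culminating in Theorem \ref{MainWoz} — to control how the $f$ and $g$ functions are interleaved by $\sigma$. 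Simultaneously, on $Q_3$ the family $\{g,h\}$ is a Kadets family converging to $\1$, so the dual block structure (the $\tau$-type arrangement) constrains how the $g$ and $h$ functions are interleaved. The contradiction should come from the fact that these two sets of constraints force incompatible placements of the $g$ functions relative to each other.

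Concretely, I would first fix $\delta > 0$ and choose $K_0(\delta)$ so that the Cauchy condition and the closeness-to-limit condition hold on all three cubes simultaneously (using that convergence on the union is convergence on each cube). Then, mimicking the construction before Proposition \ref{rj}, I would pick a large level $M$ and look at the functions from $V_M$ versus those from level $M+1$. On $Q_1$, the convergence to $\0$ means (via the $\sigma$-analysis of Wo\'{z}niakowski) that the $g$-functions of level $M$ must follow their corresponding $f$-functions of level $M$ closely in the ordering — roughly, the partial sums of the $f$'s of level $M$ get cancelled by $g$'s of level $M$. On $Q_3$, convergence to $\1$ forces (via the $\tau$-analysis) that the $h$-functions of level $M$ must appear together with the $g$-functions of level $M+1$ that they cancel, via property \ref{hglevel}. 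The point is to track the partial sums on $Q_2$: because of property \ref{dep2} and the relations \ref{gconst2}--\ref{hglevel}, the $Q_2$-behavior is rigidly tied to the counting of how many $g$'s and $h$'s of each level have appeared. I would then use Lemma \ref{l1} to replace the relevant partial sums on the product structure $Q_1 \times \{u_n\text{-coordinate}\} \times Q_3$ (or rather the appropriate three-factor splitting coming from the dependence on $t_n$, $u_n$, $v_n$) by functions depending only on the middle variable, with controlled error, and Lemma \ref{l2} to conclude that one of the outer coordinates' contributions is nearly constant — which, combined with the known limits $\0$ on $Q_1$ and $\1$ on $Q_3$, pins down the count and yields a numerical contradiction analogous to the ``$\tfrac12$ vs. $\tfrac14$'' mechanism of Theorem \ref{MainWoz} and the worked example at the end of Section \ref{3p}.

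The main obstacle, I expect, is bookkeeping the three interlocking block structures at once: the $\sigma$-blocks on $Q_1$, the $\tau$-blocks on $Q_3$, and the induced (non-cancelling!) behavior on $Q_2$. The worked counterexample at the end of Section \ref{3p} shows the mechanism — when half the $F_n$ functions have appeared, the $Q_2$ partial sum has norm $\tfrac12$ regardless of $n$ because the $g$'s of level $n$ and the $h$'s of level $n-1$ do not cancel on $Q_2$ in that arrangement — so the real work is to show this obstruction is unavoidable for \emph{every} rearrangement, not just the natural one. I anticipate needing to define cut indices $k_j$ (as in \ref{defni}) adapted to the $Q_2$ partial sums, prove the $h$-functions of the relevant level are distributed in proportions matching the $g$-functions (the analogue of Proposition \ref{rj}/Corollary \ref{fbg}), and then show that matching these proportions on $Q_2$ contradicts the cancellation required on $Q_1$ (to get $\0$) and on $Q_3$ (to get $\1$) at the same cut point. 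Getting the error terms to accumulate only linearly in $\delta$ through all three cubes, so that letting $\delta \to 0$ gives a clean contradiction, will be the delicate part.
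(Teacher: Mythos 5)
Your plan shares several ingredients with the paper's proof (contradiction, the Wo\`{z}niakowski machinery, Lemmas \ref{l1} and \ref{l2}, tracking the $Q_2$ partial sums), but two of its load-bearing steps are not available as stated. First, you propose to constrain the interleaving of the $g$'s and $h$'s by a ``dual $\tau$-analysis'' on $Q_3$ coming from convergence to $\1$ there. No such constraint exists in Section \ref{Wozza}: the quantitative machinery (the cut indices (\ref{defni}), Proposition \ref{rj}, Corollary \ref{fbg}, Proposition \ref{resztowka}) is derived only for rearrangements converging to $C \neq \1$, because its starting point is $\|c^*\| \geq \|\1 - C\| - \delta \geq 1 - \delta$, which is vacuous when $C = \1$. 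Likewise, convergence to $\0$ on $Q_1$ does not force the level-$M$ $g$'s to ``closely follow their corresponding $f$'s'' in the ordering; what the $Q_1$ analysis gives is only proportional appearance of $B_{M}$-type functions relative to $V_M \cup A_{M+1}$, with no adjacency or pairing. So the two ``interlocking block structures'' you want to play against each other do not both exist, and the contradiction mechanism (``incompatible placements of the $g$'s at the same cut point'') is left as a hope rather than an argument.

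The paper closes this differently, and the difference is essential. It fixes one large $N$ and proves by induction on the level $L$ that $\int_{Q_3}\tilde{V}_L^N \leq \frac14$, which for $L$ large enough bounds $\int_{Q_3}\sum_{n\le N} d_{\sigma(n)}$ away from $1$ and kills convergence to $\1$ on $Q_3$. The induction step splits via Lemma \ref{l2} (either the low-level part $P_1$ or the high-level part $P_2$ on $Q_3$ is nearly constant) and, in the hard cases, invokes the key Lemma \ref{l3}: if more than $\frac12 + 38\delta$ (in $Q_3$-integral) of $G_L$ has appeared among the first $N$ terms, then all but $450\delta$ of a full half of the \emph{rows} of $H_L$ (a set $P$ depending only on $v_L$, of measure exactly $\frac12$) must also have appeared. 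That lemma is exactly the quantitative ``proportionality for the $h$'s'' you anticipate, but it cannot be proved on $Q_3$ directly (where the limit is $\1$); the paper proves it by running the Section \ref{Wozza} machinery on $Q_1$ (where the limit is $\0 \neq \1$), using Lemma \ref{l1} to extract a row-constant function $\tilde{A}$ in the $t_L$ variable, and then transporting it through $Q_2$ to $Q_3$ via the norm/integral identities (\ref{gint}), (\ref{hint}), (\ref{normg}), (\ref{normh}); the row structure of $P$ is then indispensable in the final case, where one integrates $P_3$ over $P'$ and doubles to get $\int_{Q_3}\tilde{H}_L^N \leq -1 + 924\sqrt\delta$. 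Without the induction over $L$, the transfer $Q_1 \to Q_2 \to Q_3$, and the row-structured version of the proportionality statement, your outline does not yet yield a contradiction; also note the errors in the actual argument are of size $\sqrt\delta$, not $\delta$, which is harmless but contrary to your stated target.
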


\begin{proof} Suppose we have a rearrangement of some 3-Kadets family $d_{\sigma(n)}$ the sum of which
converges to $d_\infty$. Again, take an arbitrarily small $\delta
> 0$ (we shall need $927\sqrt\delta < \frac{1}{4}$, i.e. $\delta < \frac{1}{13749264}$) and an
integer $K$ satisfying inequalities (\ref{limit}) and (\ref{Cauchy}), i.e. the tails and
Cauchy sums are smaller than $\delta$ for $N>K$. Then, again, we take any $M$ satisfying
(\ref{MOK}), i.e. such that $V_M$ contains the first $K$ elements of our series. Then we take an
$N_0$ such that
\begin{equation}V_M \subset \{d_{\sigma(1)}, d_{\sigma(2)}, \ldots,
d_{\sigma(N_0)}\}.\label{N0OK}\end{equation} Consider any fixed $N > N_0$. We will prove that
\begin{equation*}\int_{Q_3} \sum_{n=1}^N d_{\sigma(n)}
< \frac{1}{4}.\end{equation*} Of course this suffices to prove that our series does not converge to
$1$ on $Q_3$, which contradicts the assumption the rearrangement converged to $(\0,\1)$.

Denote for any $L,k\in \Z$ by $D_k$ the set $\{d_{\sigma(1)},\ldots,d_{\sigma(k)}\}$, and by
$F^k_L, G^k_L, H^k_L$ and $V^k_L$ the intersections of sets $F_L, G_L, H_L$ or $V_L$, respectively,
with the set $D_k$. First we shall prove the following lemma:

\begin{lemma} \label{l3} If functions $f^n_m, g_{m,j}^n$ and $h_{m,j,k}^n$ are a 3-Kadets family
on the cubes $Q_1, Q_2$ and $Q_3$ and their set permutation $d_{\sigma(n)}$ tends to $0$ on $Q_1$
and $1$ on $Q_2$ and $Q_3$, and for a given $L$ we have $\int_{Q_3} \tilde{G}^N_L \geq \frac{1}{2}
+ 38\delta$, where $N > N_0$ as above, then there exists a $P \subset [0,1]$ such that $|P| =
\frac{1}{2}$ and $[(\tilde{H}_L^N)^{-1}(0)] \cap \{v: v_L \in P\} \subset Q_3$ has measure $\leq
450\delta$.\end{lemma}

\begin{remark} What this lemma really tells us is: if up to the $N$th element of the series at
least half plus something $(38\delta)$ of the $G_L$ functions have appeared, then at least half
minus something $(450\delta)$ of the $H_L$ functions had to appear. Moreover, the $H_L$ functions
do not appear in a haphazard fashion - we know that at least half minus something rows had to
appear (a row is the set of the functions $h_{m,j,k}^L$ with fixed $m$ and $j$ and varying
$k$).\end{remark}

\begin{proof} If $L \leq M$ then our thesis is automatically fulfilled -- all functions from $H_L$ belong
to the set $D_N$, thus we can take any set of measure $\frac{1}{2}$ for $P$ and the set
$(\tilde{H}_L^N)^{-1}(0)$ will be empty, so $P$ will satisfy the required conditions.

Now consider the case $L > M$. The numbers $K$ and $L-1$ satisfy the conditions (\ref{limit}),
(\ref{Cauchy}) and (\ref{MOK}) (as $L>M$ and $M$ satisfied (\ref{MOK})). Thus we know there exist
numbers $n_i$ satisfying (\ref{defni}). We shall prove that $N \geq n_2$. %The functions $d^*_{i,j}$
%and $\bar{d}_{i,j}$ are defined as in section \ref{Wozza}.

We know that $\int_{Q_3} \tilde{G}^N_L = -\int_{Q_1} \tilde{G}^N_L$ (as all $g_{m,j}^n$ are of the
same constant sign on each cube, the absolute value of the integral is equal to the norm, and the
norms on each cube are equal) . If $N < n_2$, then $$\|\tilde{G}_L^N\|_{Q_1} \leq
\|\tilde{G}_L^{n_2}\|_{Q_1} = \|\bar{\bar{d}}_1 + \bar{\bar{d}}_2\| \leq \|d^{**}_1\| + 19\delta +
\|d^{**}_2\| + 19\delta < \frac{1}{2} + 38\delta,$$ which contradicts our assumption (the first
inequality follows from the fact, that $g_{m,j}^n$ are all non-positive functions on $Q_1$, the
second inequality from corollary \ref{fbg}).

Thus $N > n_2$. Consider $\tilde{V}_{L-1}^{n_2} + \tilde{F}_L^{n_2}$ on $Q_1$. This function is
dependent on variables $t_1,t_2,\ldots,t_L$, while $\tilde{G}_L^{n_2} = \bar{\bar{d}}_{1} +
\bar{\bar{d}}_{2}$ on $Q_1$ depends on $t_L$ and $t_{L+1}$. From property (\ref{limit}) and
Corollary \ref{fbg} we get
\begin{equation*} \|\tilde{V}_{L-1}^{n_2} + \tilde{F}_L^{n_2} + \tilde{G}_L^{n_2}\| \leq
\| \tilde{D}_k \| + \| d^{**}_{1} + \bar{\bar{d}}_{1}\| + \| d^{**}_{2} + \bar{\bar{d}}_{2}\| \leq
\delta + 19\delta + 19\delta = 39\delta.\end{equation*}

We can thus use lemma \ref{l1} for functions $-\tilde{V}_{L-1}^{n_2} - \tilde{F}_L^{n_2}$ and
$\tilde{G}_L^{n_2}$ to get that on $Q_1$ both these functions are closer than $39\delta$ to some
integer-valued function $\tilde{A}$ depending only on $t_L$.

Each function $f_m^n$ depends only on $t_n$ and assumes values $0$ and $1$ only (properties
\ref{intva} and \ref{depa}), so it is in fact the characteristic function of a set $\{t : t_n \in
S_m^n\}$ for some $S_m^n \subset [0,1]$. As the $f_m^n$ functions have disjoint support for a fixed
$n$, they are all constant on any given $S_m^n$. The $g$ functions are also constant with regard to
$t_n$ on the $S_m^n$ due to property \ref{multab}, and all the other functions are constant with
regard to $t_n$ on the whole interval. Thus the functions $-\tilde{V}_{L-1}^{n_2} -
\tilde{F}_L^{n_2}$ and $\tilde{G}_L^{n_2}$ are constant with respect to $t_L$ on sets $\{t_L \in
S_m^L\}$ we can choose $\tilde{A}$ to be constant on those sets. Thus $\tilde{A}$ coincides on $Q_1$ with the sum of some of the rows of $G_L$, i.e. $\tilde{A}$ corresponds to some subset $A$ of $G_L$ such that for a fixed $m$ either all or none of the functions $g_{m,j}^L$ belong to $A$. Define $\tilde{A}$ on $Q_2$ and $Q_3$ as the sum of all the elements of $A$ as well, which agrees with our notation that $\tilde{U}$ is the sum of all the elements of $U$ for an arbitrary set of functions.

We know from (\ref{defni}) and Proposition \ref{resztowka} that $\|\sum_{n=n_2+1}^\infty d_n^*\|_{Q_1} \leq \frac{1-\delta}{4} + \frac{1-\delta}{4} + 11\delta \leq \frac{1}{2} + 11\delta$. Remark that $(\tilde{V}^{n_2}_{L-1} + \tilde{F}_L^{n_2} + \sum_{n=n_2 + 1}^\infty d^*_n)|_{Q_1} = (\tilde{V}_{L-1} + F_L)|_{Q_1} = \1|_{Q_1}$, so $\|\tilde{V}_{L-1}^{n_2} + \tilde{F}_L^{n_2}\|_{Q_1} \geq \frac{1}{2} - 11\delta$. On the other hand $\|\tilde{V}_{L-1}^{n_2} + \tilde{F}_L^{n_2}\|_{Q_1} = \|\tilde{D}_K + d_1^{**} + d^{**}_2\|_{Q_1} \leq \delta _ \frac{1-\delta}{4} + \frac{1 - \delta}{4} \leq \frac{1}{2} + \delta$. As $\|\tilde{V}_{L-1}^{n_2} + \tilde{F}_L^{n_2} - \tilde{A}\|_{Q_1} \leq 39\delta$, taking into account the equality $\|\tilde{A}\|_{Q_1} = \|\tilde{A}\|_{Q_2}$ we can estimate that

\begin{equation}\frac{1}{2} - 50\delta \leq \|\tilde{A}\|_{Q_2} \leq \frac{1}{2} +
40\delta.\label{measat}\end{equation}

Distinct functions from $G_L$ have disjoint supports on $Q_1$ (this follows from the properties
\ref{disjoint} and \ref{multab} of Kadets families), and each has the same norm $\psi =
\frac{1}{|M_L \times J_L|}$. Thus if the distance between two functions corresponding to two
subsets of $G_L$ on $Q_1$ is smaller than $n\psi$, then at most $n$ functions belong to the
symmetric difference of those two subsets. If at most $n$ functions belong to the symmetric
difference, then the distance between the two functions on $Q_2$ is at most $n\psi$ (as on $Q_2$
the norm of a single function is also equal $\psi$ by property \ref{normg}). Thus, in general, if
$B,C\subset G_L$, then $\|\tilde{B} - \tilde{C}\|_{Q_1} \geq \|\tilde{B} - \tilde{C}\|_{Q_2}$. In
particular $\tilde{G}_L^{n_2}$ is at most $39\delta$ distant from $\tilde{A}$ on $Q_2$.

Now consider what happens on $Q_2$. From (\ref{gaddup}) the
restriction of $\tilde{A}$ to $Q_2$ is equal to $1$ on some set
(on intervals $t_L \in [\frac{m-1}{L},\frac{m}{L}]$ for $m$ such
that $g^L_{m,j} \in A$) and 0 on the rest. From (\ref{limit}), as
$n_2 > K$, we have $\|\tilde{D}_{n_2} - 1\|_{Q_2} \leq \delta$. If
we substitute $\tilde{A}$ for $\tilde{G}_L^{n_2}$, we will be at
most $40\delta$ distant from zero, precisely
\begin{equation*} \|\tilde{D}_{n_2} - 1 - \tilde{G}^{n_2}_L + \tilde{A}\|_{Q_2} \leq
40\delta.\end{equation*} However as only $G_L$ and $H_L$ depend on $u_L$, this sum is composed of
two parts - the part $\tilde{A} + \tilde{H}_L^{n_2}$ dependent on $u_L$ and the whole rest (i.e.
$\tilde{D}_{n_2} - (\tilde{G}_L^{n_2} + \tilde{H}_L^{n_2})$) dependent on other variables. Thus we
can apply a simplified version of lemma \ref{l1}, with $f = \tilde{A} + \tilde{H}_L^{n_2}$, $g=
-(\tilde{D}_{n_2} - V_L^{n_2})$, and a trivial one-point space as $B$. We learn that both our
functions are within $80\delta$ from a function $c$ dependent on $b$ -- but as $B$ was a one-point
space, $c$ is a constant function. As $\tilde{A}$ assumes values $0$ and $1$, and
$\tilde{H}_L^{n_2} \in [-1,0]$, their sum is non-negative on $\supp\tilde{A}$ and non-positive on
the remainder of $Q_2$.

From (\ref{measat}) we know that $|\supp\tilde{A}| \geq \frac{1}{2} - 50\delta$, thus $\tilde{A} +
\tilde{H}_L^{n_2}$ is non-negative on a set of measure $\geq \frac{1}{2} - 50\delta$. If $c$ is
positive, then (as $\delta < \frac{1}{200}$)
\begin{equation*}80\delta \geq \|\tilde{A} + \tilde{H}_L^{n_2} - c\| \geq c (\frac{1}{2} - 50\delta) \geq
\frac{c}{4},\end{equation*} which implies $c \leq 320\delta$.
Similarly if $c$ is negative, we know from (\ref{measat}) that
$|Q_2 \setminus \supp\tilde{A}| \geq \frac{1}{2} - 40\delta$,
yielding again $c > -\frac{800}{3}\delta$. Thus $|c| < 320\delta$,
so $\|\tilde{A} + \tilde{H}_L^{n_2}\| \leq \|\tilde{A} +
\tilde{H}_L^{n_2} - c\| + |c| \leq 80\delta + 320\delta =
400\delta$.

Thus $\tilde{H}_L^{n_2}$ is within $400\delta$ of a function with values 0 and -1 on $Q_2$ -- the
function $-\tilde{A}$. Remark, that $-\tilde{A} = -\tilde{A}'$ on $Q_2$ for a subset $A'$ of $H_L$ with the property that for a given $m$ either all of the functions $h_{m,j,k}^L$ belong to $A'$, or none of
the functions belongs to $A'$ (if a given $g_{m,j}^L$ belongs to $A$, then all $h_{m,j,k}^L$ belong
to $A'$) . If $\tilde{A'}$, where $A' \subset H_L$, is a function assuming only values $0$ and $1$
on $Q_2$ and $B \subset H_L$, then
\begin{eqnarray*}\|\tilde{A'} - \tilde{B}\|_{Q_2} &=& \|\tilde{A'} - \tilde{B}\|_{\supp \tilde{A'}}
+ \| \tilde{A'} - \tilde{B}\|_{\Q_2 \setminus \supp\tilde{A'}} \\ &=& \frac{1}{|M_L \times J_L
\times K_L|}|\{h : h \in A' \wedge h \not\in B\}| + \frac{1}{|M_L \times J_L \times K_L|}|\{h :
h\not\in A' \wedge h\in B\}|
\\ &=& \frac{1}{|M_L \times J_L
\times K_L|}|A \bigtriangleup B| = \|\tilde{A'} -\tilde{B}|_{Q_3}.\end{eqnarray*}

Let us take any subset $A''$ of $H_L$ depending only on $m$ and $j$ with exactly half of the
elements of $H_L$ and containing $A'$ or contained in $A'$. If $B \subset C \subset H_L$ or $C
\subset B \subset H_L$, then $\|\tilde{C} - \tilde{B}\| = |\|\tilde{C}\| - \|\tilde{B}\||$, because
all the the functions in $H_L$ are non-positive. As $\tilde{A'} = - \tilde{A}$ on $Q_2$ and from
(\ref{measat}) $|\|\tilde{A}\|_{Q_2} - \frac{1}{2}| \leq 50\delta$, we get $\|\tilde{A'} - \tilde{A''}\|_{Q_2}
\leq 50\delta$, and thus $\|H_L^{n_2} -\tilde{A''}\|_{Q_3} = \|H_L^{n_2} - \tilde{A''}\|_{Q_2} \leq 450\delta$.

Now consider what happens on $Q_3$. As $\tilde{H}_L^{n_2}$ and
$\tilde{A''}$ are both integer-valued on $Q_3$, this means they
differ on a set of measure at most $450\delta$, and thus their
difference can be positive on a set of measure at most
$450\delta$. When we increase $n$ from $n_2$ to $N$ the set where
the difference is positive can only decrease. Thus $|\{H_L^N -
\tilde{A''} > 0\}| \leq 450\delta$. Now for $P$ we take $\supp
\tilde{A}''$. The set $[(\tilde{H}_N^L)^{-1}(0)] \cap \{v: v_L \in
P\}$ is the set where $H_N^L$ is equal to zero and $\tilde{A}''$
is negative --- thus their difference is positive, so the set has
to have measure smaller than $450\delta$, which is what we had to
prove.
\end{proof}

Now the main proof. Assume $d_\infty = (\0,\1)$, i.e. our series converges to $\1$ on $Q_2$ and
$Q_3$ and to $\0$ on $Q_1$. We shall prove by induction upon $L$ that $\int_{Q_3} \tilde{V}_L^N
\leq \frac{1}{4}$. As $\sum_{n=1}^N d_{\sigma(n)}$ is finite, its elements are contained in some
$V_L$, thus if the thesis is true, we get $\int_{Q_3}\sum_{n=1}^N d_{\sigma(n)} \leq \frac{1}{4}$,
which is what we had to prove. For $L < M$ we have $V_L \subset D_N$ and from property
(\ref{absame}) $\int_{Q_3} \tilde{V}_L^N = 0 \leq \frac{1}{4}$. Now suppose we have the thesis for
$L-1$ and attempt to prove it for $L$. Denote by $P_1$ the function $(\tilde{V}_{L-1}^N +
\tilde{G}_L^N) |_{Q_3}$ and by $P_2$ the function $\sum_{n > L} \tilde{G}_n^N +
\tilde{H}_n^N|_{Q_3}$. Consider the function $\tilde{H}_L^N|_{Q_3}$. It depends on variables $v_L$
and $v_{L+1}$. The function $P_1$ depends on $v_1,\ldots, v_L$, while $P_2$ depends on
$v_{L+1},\ldots,v_Z$ for some $Z\in \Z$. The function $H_L^N|_{Q_3}$ assumes only values 0 and -1,
all three functions -- $H_L^N|_{Q_3}$, $P_1$ and $P_2$ are integer-valued, and from (\ref{limit})
their sum is less then $\delta$ distant from $\1$ on $Q_3$. Thus by taking $P_1' = P_1 - 1$ we have
three functions fulfilling the assumptions of lemma \ref{l2}. Thus either $P_1$ or $P_2$ is within
$3\sqrt{\delta}$ of a constant function. In each of these cases the proof will also depend on
whether $\int_{Q_3} \tilde{G}_L^N \leq \frac{1}{2} + 38\delta$ or $\int_{Q_3} \tilde{G}_L^N >
\frac{1}{2} + 38\delta$. Thus we have in total four cases to consider.

Suppose first that $P_2$ is within $3\sqrt\delta$ of a constant function. As $\|P_1 + P_2 +
\tilde{H}_L^N - 1\| \leq \delta$, this means that $P_1 + \tilde{H}_L^N$ is within $3\sqrt\delta +
\delta \leq 4\sqrt\delta$ of a constant function. If $\int_{Q_3} \tilde{G}_L^N \leq \frac{1}{2} +
38\delta$, then $\int_{Q_3} \tilde{V}_L^N = \int_{Q_3} \tilde{V}_{L-1}^N + \tilde{G}_L^N +
\tilde{H}_L^N \leq \frac{1}{4} + (\frac{1}{2} + 38\delta) + 0 = \frac{3}{4} + 38\delta$. But this
function is equal $P_1 + \tilde{H}_L^N$, and so is within $4\sqrt\delta$ of some constant integer
$c$ and its integral also has to be within $4\sqrt\delta$ of $c$. As $4\sqrt\delta + 38\delta <
\frac{1}{4}$, we get $c \leq 0$, thus $\int_{Q_3} \tilde{V}_L^N \leq c + 4\sqrt{\delta} \leq
\frac{1}{4}$.

If $P_2$ is within $3\sqrt\delta$ of a constant function, and $\int_{Q_3} \tilde{G}_L^N >
\frac{1}{2} + 38\delta$, then again $P_1 + \tilde{H}_L^N$ is within $4\sqrt\delta$ from a constant
integer $c$. From lemma \ref{l3} we have in particular that $\int_{Q_3} \tilde{H}_L^N \leq
-\frac{1}{2} + 450\delta $. Obviously $\int_{Q_3} \tilde{G}_L^N \leq 1$, thus $\int_{Q_3} V_L^N =
\int_{Q_3} V_{L-1}^N + \tilde{G}_L^N + \tilde{H}_L^N \leq \frac{1}{4} + 1 - \frac{1}{2} + 450\delta
= \frac{3}{4} + 450\delta$. As this is supposed again to within $4\sqrt{\delta}$ of $c$, we have $c
\leq 0$ as $450\delta + 4\sqrt\delta \leq \frac{1}{4}$. Again thus $\int_{Q_3} \tilde{V}_L^N \leq c
+ 4\sqrt\delta \leq \frac{1}{4}$.

In the third case we suppose that $P_1'$, and thus also $P_1$ is within $3\sqrt\delta$ of a
constant function and $\int_{Q_3} \tilde{G}_L^N \leq \frac{1}{2} + 38\delta$. As $\int_{Q_3}
\tilde{V}_{L-1}^N \leq \frac{1}{4}$ from the inductive assumption, we have $\int_{Q_3} P_1 \leq
\frac{3}{4} + 38\delta$. As $P_1$ is supposed to be within $3\sqrt\delta$ of some constant integer
$c$, its integral also has to be within $3\sqrt\delta$ of $c$, which again implies $c \leq 0$ and
$\int_{Q_3} P_1 \leq  3\sqrt\delta$. As $\tilde{V}_L^N = P_1 + \tilde{H}_L^N$ and $\tilde{H}_L^N
\leq 0$, we get $\int_{Q_3} \tilde{V}_L^N \leq 3\sqrt\delta \leq \frac{1}{4}$.

The last case is when $P_1$ is within $3\sqrt\delta$ of a constant integer $c$ and $\int_{Q_3}
\tilde{G}_L^N > \frac{1}{2} + 38\delta$. In this case from lemma \ref{l3} we know there exists a
set $P' \subset Q_3$ dependent only on $v_L$ such that $|P'| = \frac{1}{2}$ and $\int_{P'}
\tilde{H}_L^N \leq -\frac{1}{2} + 450\delta$. If $P_1$ is within $3\sqrt\delta$ of a constant
integer function and $P_1 + P_2 + \tilde{H}_L^N$ is within $\delta$ of $1$ (from \ref{limit}) then
$P_2 + \tilde{H}_L^N$ is within $3\sqrt\delta + \delta \leq 4\sqrt\delta$ of some constant integer
function $C$. Taking $P_2' = P_2 - C$ we arrrive in the situation of lemma \ref{l1}:
$\tilde{H}_L^N$ depends on $v_L$ and $v_{L+1}$ while $P_2'$ depends on $v_{L+1}, v_{L+2}, \ldots,
v_Z$. This means that each of them is within $8\sqrt\delta$ of some integer function $P_3$
dependent only on $v_{L+1}$. As $\int_{P'}\tilde{H}_L^N \leq -\frac{1}{2} + 450\delta$ and
$\|\tilde{H}_L^N - P_3\| \leq 8\sqrt\delta$, we gather that $\int_{P'} P_3 \leq -\frac{1}{2} +
450\delta + 8\sqrt\delta \leq -\frac{1}{2} + 458\sqrt\delta$. As $P'$ depends only on $v_L$ and
$P_3$ only on $v_{L+1}$ and $|P'| = |Q_3 \setminus P'|$ , $$\int_{Q_3} P_3 = \int_{P'} P_3 +
\int_{Q_3 \setminus P'} P_3 = 2 \int_{P'} P_3 \leq -1  +916\sqrt\delta.$$ Returning to
$\tilde{H}_L^N$ we get $\int_{Q_3} \tilde{H}_L^N \leq \int_{Q_3} P_3 + 8\sqrt\delta \leq -1 +
924\sqrt\delta$.

As $\int_{Q_3} \tilde{G}_L^N \leq 1$ and $\int_{Q_3} \tilde{V}_{L-1}^N \leq \frac{1}{4}$ we get
$\int_{Q_3} P_1 \leq \frac{5}{4}$. As before, $\int_{Q_3} P_1$ has to be within $3\sqrt\delta$ of
the integer $c$, implying $c \leq 1$ and $\int_{Q_3} P_1 \leq 1 + 3\sqrt\delta$. We have
$\int_{Q_3} \tilde{V}_L^N = \int_{Q_3} P_1 + \tilde{H}_L^N \leq 1 + 3\sqrt\delta - 1 +
924\sqrt\delta \leq 927\sqrt\delta \leq \frac{1}{4}$.

Thus in all four cases we have completed the induction step, which proves in a finite number of
steps that $\int_{Q_3}\tilde{D}_N \leq \frac{1}{4}$. This holds for an arbitrary $N > N_0$, and
would thus have to hold for the limit function, $\int_{Q_3} d_\infty \leq \frac{1}{4}$, which
obviously contradicts the assumption that $d_\infty |_{Q_3} = \1$.\end{proof}

\begin{cor}\label{MainCor} A 3-Kadets series has a 3-point sum range, consisting of the functions
$(\0,\0)$, $(\1,\0)$ and $(\1,\1)$. As previously, this holds for any $L_p$ with $1 \leq p <
\infty$\end{cor}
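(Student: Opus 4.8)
The plan is to assemble Corollary~\ref{MainCor} from the pieces already developed in Sections~\ref{3p} and~\ref{4p}. First I would recall that in Section~\ref{3p} we explicitly constructed three rearrangements of an arbitrary 3-Kadets family whose sums converge to $(\0,\0)$, $(\1,\0)$ and $(\1,\1)$ respectively, establishing that these three points belong to the sum range. So the only thing left is to show that \emph{no other point} is in the sum range.

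Next I would use the structural results that pin down the possible limits. If $d_{\sigma(n)}$ converges, then (as observed in Section~\ref{3p}) it converges on each cube $Q_1,Q_2,Q_3$ separately. On $Q_1$ and $Q_3$ the restricted families are Kadets families, so by Corollary~\ref{otnorm} (really Proposition~\ref{intcons} and Theorem~\ref{MainWoz}) the limit on each of those cubes is either $\0$ or $\1$. On $Q_2$ the argument recalled in Section~\ref{3p} shows the limit is a constant function, and the identity $\int_{Q_2}\tilde D_N=\int_{Q_3}\tilde D_N$ together with continuity of the integral forces the constant on $Q_2$ to equal the constant on $Q_3$. Hence the limit $d_\infty$ is determined by the ordered pair (value on $Q_1$, value on $Q_3$) $\in\{\0,\1\}^2$, leaving only four candidates: $(\0,\0),(\1,\0),(\0,\1),(\1,\1)$. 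The candidate $(\0,\1)$ is excluded by Theorem~\ref{Main}. Therefore the sum range is exactly $\{(\0,\0),(\1,\0),(\1,\1)\}$, a 3-point set. I would remark that the remaining combinatorial possibility $(\1,\1)$ with the value on $Q_1$ being $\1$ and on $Q_3$ being $\0$ --- i.e.\ $(\1,\0)$ --- is already in our list; the point is simply that all four pairs reduce to the three listed once $(\0,\1)$ is ruled out. (One should double-check there is no hidden obstruction to $(\1,\0)$, but Section~\ref{3p} gave an explicit rearrangement converging to it, so it is fine.)

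Finally, for the $L_p$ statement with $1\le p<\infty$: the three rearrangements built in Section~\ref{3p} were already observed there to converge in every $L_p$ norm, because on each cube the partial sums have $L_\infty$ norm bounded by $1$, so the interpolation estimate $\|f\|_p\le\|f\|_1^{1/p}$ from the proof of Remark~\ref{otnormsums} applies. Conversely, since all three cubes carry probability measures, the total space $Q_1\cup Q_2\cup Q_3$ has finite measure and H\"older's inequality gives $\|S_N-g\|_p\ge c\,\|S_N-g\|_1$ for a constant $c>0$ depending only on the total measure; hence $L_p$-convergence of a rearrangement implies $L_1$-convergence to the same limit, so the $L_p$ sum range is contained in the $L_1$ sum range, and the two coincide.

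The main obstacle is purely one of bookkeeping rather than mathematics: one must be careful that the reduction ``$d_\infty$ is determined by a pair of integers, each $0$ or $1$'' genuinely uses all the $Q_2$-specific properties (\ref{gconst2})--(\ref{dep2}) --- in particular that the $g$ and $h$ functions on $Q_2$ depend only on $u_n$ and satisfy the single-$j$ cancellation (\ref{hgsingle}) --- and that Theorem~\ref{Main} is invoked with exactly the hypotheses it was proved under (a 3-Kadets family, rearrangement tending to $\0$ on $Q_1$ and $\1$ on $Q_2,Q_3$). Once those hypotheses are seen to match, the corollary follows immediately.
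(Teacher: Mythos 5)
Your proposal is correct and follows exactly the route the paper intends: the corollary is stated without a separate proof precisely because it is the assembly you describe --- the three explicit rearrangements of Section~\ref{3p} give attainment, the reduction of any limit to a pair in $\{\0,\1\}^2$ (constant limits on each cube, Theorem~\ref{MainWoz} on $Q_1$ and $Q_3$, equality of the $Q_2$ and $Q_3$ constants via the integral identity) leaves four candidates, Theorem~\ref{Main} excludes $(\0,\1)$, and the $L_p$ statement follows from the bounded partial sums plus H\"older as in Remark~\ref{otnormsums} and Corollary~\ref{otnorm}. The only blemish is the garbled aside about ``$(\1,\1)$ with value $\1$ on $Q_1$ and $\0$ on $Q_3$,'' which is redundant and can simply be deleted.
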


\section{More points}

From the previous section we know how to make 3 points out of 2. The same mechanism can be applied
to make $r+1$ points out of $r$.

\begin{thm} For any $r > 1$ there exist a family $d_k$ of functions defined on a union of cubes
$Q_1,\ldots,Q_N$ with an $r$-point sum range. Additionally we can distinguish two disjoint subsets
$\FF$ and $\GG$ of $\{d_k : k \in \N\}$ which form a Kadets family on $Q_N$, while all other
functions $d_k$ disappear on $Q_N$. Moreover one function in the sum range of $d_k$ is equal to
$\1$ on $Q_N$ and all the other functions from the sum range disappear on $Q_N$. Finally there
exist rearrangements convergent to any point of the sum range in which the sets $\FF$ and $\GG$ are
arranged as in Proposition \ref{propsums}. \label{npoi} \end{thm}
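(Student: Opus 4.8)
The plan is to induct on $r$, using the passage from the $2$-point to the $3$-point example (Sections \ref{3p}--\ref{4p}) as the generic inductive step. For $r=2$ take a single cube $Q_1$ carrying a Kadets family, with $\FF=\{a_m^n\}$ and $\GG=\{b_{m,j}^n\}$; by Proposition \ref{propsums} and Corollary \ref{otnorm} its sum range is exactly $\{\0,\1\}$, the point $\1$ equals $\1$ on $Q_1$ and $\0$ vanishes there, and the permutations $\sigma$, $\tau$ of Proposition \ref{propsums} arrange $\FF$, $\GG$ as required while converging to $\0$ and $\1$. So all clauses hold for $r=2$.

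For the step, assume the statement for $r$, with $\{d_k\}$ on $Q_1,\dots,Q_N$, distinguished subsets $\FF$, $\GG$ forming a Kadets family on $Q_N$ (index sets $\mathcal M_n$ and $\mathcal J_n=\mathcal M_{n+1}$), unique sum-range point $p_0$ equal to $\1$ on $Q_N$, and every other $d_k$ vanishing on $Q_N$. Adjoin two cubes $Q_{N+1},Q_{N+2}$. Keep all $d_k$ unchanged on $Q_1,\dots,Q_N$; extend $\FF$ and every old function outside $\FF\cup\GG$ by $0$ on $Q_{N+1},Q_{N+2}$; extend each function of $\GG$ to $Q_{N+1},Q_{N+2}$; and introduce a new family $\HH$ supported on $Q_{N+1}\cup Q_{N+2}$ so that on $Q_{N+2}$ the pair $(\GG,\HH)$ is a Kadets family with first index set $\mathcal N_n=\mathcal M_n\times\mathcal J_n$ and second index set $\mathcal N_{n+1}$, and on $Q_{N+1}$ the functions $\GG,\HH$ satisfy the analogues of (\ref{gconst2})--(\ref{dep2}). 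All of this is achieved by the explicit formulas of Section \ref{3p} with $\mathcal M_n$ replacing $\{1,\dots,n\}$; then $(\FF,\GG,\HH)$ is a $3$-Kadets family on $Q_N\cup Q_{N+1}\cup Q_{N+2}$, and — crucially — every old function other than those of $\GG$ vanishes on all three of these cubes (by the inductive clause on $Q_N$).

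Now compute the sum range of the enlarged family. If a rearrangement converges then, discarding interspersed zeros, its restriction to $Q_1\cup\dots\cup Q_N$ is a rearrangement of the old family, hence tends there to one of the $r$ old points $p$; and its restriction to $Q_N\cup Q_{N+1}\cup Q_{N+2}$ is a rearrangement of the $3$-Kadets family $(\FF,\GG,\HH)$, so by Corollary \ref{MainCor} the limit on $(Q_N,Q_{N+1},Q_{N+2})$ is $(\0,\0,\0)$, $(\1,\0,\0)$ or $(\1,\1,\1)$: the $Q_{N+2}$-value equals the $Q_{N+1}$-value and is $\0$ unless the $Q_N$-value is $\1$, i.e. unless $p=p_0$. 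Thus the sum range is contained in $\{(p,\0):p\text{ old}\}\cup\{(p_0,\1)\}$, which is $r+1$ points. Conversely, for each old $p$ take (inductive hypothesis) a rearrangement realizing $p$ with $\FF,\GG$ arranged as in Proposition \ref{propsums} — the $\sigma$-pattern if $p\neq p_0$, the $\tau$-pattern if $p=p_0$ — and insert the $\HH$-functions right after the matching $\GG$-functions in the $\sigma$-pattern for $(\GG,\HH)$ on $Q_{N+2}$; since these insertions vanish on $Q_1,\dots,Q_N$ the old limit $p$ is unaffected, and on $Q_{N+1},Q_{N+2}$ one gets $\0$ by the cancellation (\ref{hgsingle}), exactly as in the cases $(\0,\0)$, $(\1,\0)$ of Section \ref{3p}. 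For the extra point take the $\tau$-rearrangement to $p_0$ and insert $\HH$ in the $\tau$-pattern for $(\GG,\HH)$, using (\ref{hglevel}) as in the case $(\1,\1)$ of Section \ref{3p}; this converges to $(p_0,\1)$. All these series also converge in every $L_p$, $p<\infty$, by the argument of Remark \ref{otnormsums}. Hence the sum range is exactly the $r+1$ points.

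The remaining structural clauses are then visible: set $\FF'=\GG$, $\GG'=\HH$, which are disjoint and form a Kadets family on the new last cube $Q_{N+2}$, while every other function of the enlarged family vanishes on $Q_{N+2}$; among the $r+1$ sum-range points, $(p_0,\1)$ is the unique one equal to $\1$ on $Q_{N+2}$ and all the $(p,\0)$ vanish there; and the rearrangements produced above realize each point with $(\FF',\GG')$ in one of the two patterns of Proposition \ref{propsums} ($\sigma$ for the $(p,\0)$, $\tau$ for $(p_0,\1)$). This completes the induction. The main obstacle is to formulate the inductive hypothesis so that Theorem \ref{Main} and Corollary \ref{MainCor} apply verbatim to the last three cubes; this is what forces the clause that all functions outside $\FF,\GG$ vanish on $Q_N$ (so that after extension they vanish on $Q_N,Q_{N+1},Q_{N+2}$, making the restriction there a genuine $3$-Kadets rearrangement) and the clause carrying the Proposition \ref{propsums}-type rearrangements (needed to line up the $\sigma/\tau$ dichotomy on $Q_N$ with the one on $Q_{N+2}$ when producing all $r+1$ limits).
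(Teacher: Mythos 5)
Your proposal is correct and takes essentially the same route as the paper: induction on the number of points, adjoining two cubes, extending $\GG$ and adding $\HH$ so that $(\FF,\GG,\HH)$ is a 3-Kadets family on the last three cubes, using Theorem \ref{Main} and Corollary \ref{MainCor} to cap the sum range at one extra point, and realizing all points by the arrangements of Section \ref{3p}, with the Proposition \ref{propsums} clause carried through the induction exactly as the paper does. One small slip: ``every old function other than those of $\GG$ vanishes on all three of these cubes'' should read ``other than those of $\FF\cup\GG$'', since $\FF$ does not vanish on $Q_N$ --- your final paragraph states the intended condition correctly.
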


\begin{proof} We shall prove the thesis by induction upon $r$. For $r=2$ the original Kadets
example with $N=1$ satisfies the given conditions.

Suppose we have an appropriate family for $r-1$. We add two cubes to the domain of $d_k$: $Q_{N+1}$
and $Q_{N+2}$. Denote by $x=(x_1,x_2,\ldots)$ the variable on $Q_{N+1}$ and by $y=(y_1,y_2,\ldots)$
the variable on $Q_{N+2}$. All the functions except $\GG$ will disappear on these cubes. For each
$n$ we divide the unit interval $[0,1]$ into $|M_n|$ sets $S_m^n, m \in M_n$ of measure
$\frac{1}{|M_n|}$ each. We define $g_{m,j}^n$ to be equal $\frac{1}{|J_n|}$ if $x_n \in S_m^n$, 0
otherwise. Next we define $K_n = M_{n+1}\times J_{n+1}$ and divide the unit interval $[0,1]$ into
$|K_n|$ sets $T_k^n$ of equal measure, and on $Q_{N+2}$ define $g_{m,j}^n$ to be equal to 1 if $y_n
\in T_{(m,j)}^{n-1}$, 0 otherwise. Finally to the functions $d_k$ we add a set of functions $\HH =
\{h_{m,j,k}^n\}$ which disappear on the cubes $Q_1$ to $Q_N$, and satisfy $h_{m,j,k}^n =
-\frac{1}{|K_n|}g_{m,j}^n$ on $Q_{N+1}$ and $h_{m,j,k}^n = -g_{m,j}^n \cdot g_k^{n+1}$ on
$Q_{M+2}$.

It is again easy, although tedious, to check that $\FF$, $\GG$ and the new functions $\HH$ form a
3-Kadets family on $Q_N, Q_{N+1}, Q_{N+2}$. We claim that the set $\{d_k\} \cup \HH$ satisfies the
conditions given in the theorem. The sets $\GG$ and $\HH$ form a Kadets family on $Q_{N+2}$, all
other functions disappear on $Q_{N+2}$. We have to check the sum ranges. Let us fix any convergent
rearrangement $e_k$ of $\{d_k\} \cup \HH$. From the properties of 3-Kadets families given in
section \ref{3p} we know that the limit on $Q_{N+1}$ and $Q_{N+2}$ is going to be the same, and
equal either $\0$ or $\1$. From theorem \ref{Main} we know that if the series converges to $\0$ on
$Q_M$, it has to converge to $\0$ on $Q_{N+1}$ and $Q_{N+2}$. Thus at most $r+1$ limits can be
achieved - the functions with $\0$ on $Q_N$ generate one each (by the 0-extension onto $Q_{N+1}
\cup Q_{N+2}$), while the single function with $\1$ on $Q_N$ can be extended by either $\0$ or $\1$
to $Q_{N+1} \cup Q_{N+2}$. This also satisfies the condition that only one of the points in the sum
range is $\1$ on $Q_{N+2}$, while the other points disappear on $Q_2$.

We can of course attain all the desired points in the sum range with $\GG$ and $\HH$ ordered as in
Proposition \ref{propsums} by taking the rearrangements with $\FF$ and $\GG$ ordered as in the
proposition and inserting $\HH$ as in section \ref{3p}. \end{proof}

Thus it is possible to attain a affine-independent finite set of any size $r$ as a sum range of a
conditionally convergent series. Again, this works for any $L_p$, $1\leq p <\infty$.

To attain full generality on $L_p$ we would attain arbitrary sum ranges, and not only the affine-independent sum range given above. We will do that according to the scheme from \cite{russ}, as follows:

\begin{lemma} Let $\Omega$ be an arbitrary probability space, $c_n \in \R$, $c_n \ra 0$ and let $f_n \in L_2(\Omega)$ be a sequence of integer-valued functions. Then the series $\sum_{n=1}^\infty (f_n + c_n)$ converges if and only if both $\sum_{n=1}^\infty f_n$ and $\sum_{n=1}^\infty c_n$ converge.\end{lemma}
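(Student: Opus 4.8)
The plan is to prove the trivial implication directly and to reduce the nontrivial one to a one-line observation about integer-valued functions on a probability space; throughout I identify the scalar $c_n$ with the constant function $c_n\1$, as the statement implicitly does, and ``converges'' means convergence in $L_2(\Omega)$. \textbf{The easy direction:} since $\mu(\Omega)=1$ we have $\norm{c_n\1}=|c_n|$, so the partial sums of $\sum c_n\1$ in $L_2(\Omega)$ are Cauchy exactly when those of $\sum c_n$ in $\R$ are; hence if $\sum c_n$ converges then $\sum c_n\1$ converges in $L_2$, and adding this to a convergent $\sum f_n$ gives convergence of $\sum(f_n+c_n)$.

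\textbf{A sublemma.} If $\phi\in L_2(\Omega)$ is integer-valued and $r\in\R$, then $\operatorname{dist}(r,\Z)\le\norm{\phi+r\1}$. Indeed, pointwise $|\phi(x)+r|\ge\operatorname{dist}(r,\Z)$ because $\phi(x)\in\Z$, and squaring and integrating over a probability space only multiplies the bound by $\mu(\Omega)=1$. (The same argument works verbatim in any $L_p$, $p\ge1$, which is presumably why the lemma is stated for $L_2$ only as a convenience.)

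\textbf{The hard direction.} Assume $S_N:=\sum_{n=1}^N(f_n+c_n)$ converges, hence is Cauchy, and write $S_N=\Phi_N+t_N\1$ with $\Phi_N:=\sum_{n=1}^N f_n$ integer-valued and $t_N:=\sum_{n=1}^N c_n$. For $M<N$ we have $S_N-S_M=(\Phi_N-\Phi_M)+(t_N-t_M)\1$ with $\Phi_N-\Phi_M$ integer-valued, so the sublemma gives $\operatorname{dist}(t_N-t_M,\Z)\le\norm{S_N-S_M}$. The next step is to upgrade this to genuine convergence of $(t_N)$ using $c_n\ra0$: given $\eps\in(0,1/4)$ choose $K$ with $\norm{S_N-S_M}<\eps$ for all $M,N\ge K$ and $|c_n|<\eps$ for all $n\ge K$, and prove by induction on $N\ge K$ that $|t_N-t_K|<\eps$. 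The inductive step uses $|t_{N+1}-t_K|\le|t_N-t_K|+|c_{N+1}|<2\eps<1/2$, which forces $\operatorname{dist}(t_{N+1}-t_K,\Z)=|t_{N+1}-t_K|$, and that distance is $<\eps$ by the sublemma; this closes the induction, so $(t_N)$ is Cauchy in $\R$ and $\sum c_n$ converges. Then $\sum f_n=\sum\bigl((f_n+c_n)-c_n\1\bigr)$ converges in $L_2$ as a difference of two convergent series.

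\textbf{The main obstacle.} Every step above is routine except the last induction, the ``no jump'' argument. The sublemma only locates $t_N-t_M$ near \emph{some} integer, and the whole force of the hypothesis $c_n\ra0$ is precisely that this integer must be $0$ in the limit: without it $t_N$ could drift between, say, $0$ and $1$ while $f_n+c_n\ra0$, and then $\sum(f_n+c_n)$ would converge with neither $\sum f_n$ nor $\sum c_n$ converging. Getting the constants in that induction right --- so that the running partial sum stays strictly inside $(-1/2,1/2)$ and the nearest integer is genuinely $0$ --- is the one point that needs care.
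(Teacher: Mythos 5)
Your proof is correct and is essentially the paper's argument: both rest on the same decomposition of the partial sums into an integer-valued part plus a scalar constant, on the observation that an integer-valued function plus a constant at distance $\geq \eps$ from $\Z$ has norm $\geq \eps$ on a probability space, and on $c_n \ra 0$ to keep the scalar partial sums from slipping past $\tfrac12$ undetected. The only difference is presentational: the paper argues contrapositively, using $c_n \ra 0$ to truncate a large Cauchy block of $\sum c_n$ so that its sum lands in $(\eps, 2\eps) \subset (\eps, \tfrac12)$ and then contradicting the Cauchy property of $\sum (f_n + c_n)$, whereas you run the same estimate forward as an induction showing $|t_N - t_K| < \eps$ for all $N \geq K$, which gives the Cauchy property of $\sum c_n$ directly.
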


\begin{proof} The ``if'' part is obvious. For the ``only if'' part it is enough to prove that if $\sum c_n$ diverges, then $\sum (f_n + c_n)$ has to diverge as well. In fact if $\sum c_n$ diverges then there exists an $\eps \in (0,1 \slash 4)$ such that for any $N \in \N$ we have a large Cauchy sum above $N$, i.e. for some $l > k > N$ we have $|\sum_{n=k}^l c_n| > \eps$. As $c_n \ra 0$ we can take $N$ large enough to ensure $|c_j| < \eps$ for $j > N$. Thus we can select $l = l(k)$ such that $\eps < \sum_{n=k}^{l(k)} c_n < 2\eps < \frac{1}{2}$. But then $\| \sum_{n=k}^{l(k)} (f_n + c_n)\| \geq \eps$ as a sum of an integer-valued function and a constant $c \in (\eps, 1\slash 2)$, which ensures the divergence of $\sum (f_n + c_n)$.\end{proof}

Now let us apply this lemma to our example from Theorem \ref{npoi}. We have a series $d_k$ with an $r+2$-point sum range $D$ defined on $\Omega = \bigcup_{i=1}^{2r+1} Q_i$ of cubes. We consider it as a series defined on $L_2 (\Omega)$. Let us denote $X = \lin \{\chi_{Q_1},\chi_{Q_2},\ldots,\chi_{Q_{2r+1}}\}$, i.e. the subspace of the piece-wise constant functions on $\Omega$. Let $P : L_2(\Omega) \ra X$ be the orthogonal projection onto $X$. Denote by $Y$ the subspace of $X$ consisting of those piecewise constant functions $(f_i)_{i=1}^{2r+1}$, where $f_i$ is the value of $f$ on $Q_i$, that $f_{2j} = f_{2j + 1}$ for $j = 1,2,\ldots, r$.

Recall that $\int_{Q_2j} d_k d\mi = \int_{Q_{2j+1}} d_k d\mi$ for $j=1,2,\ldots,r$. Thus for any $d_k$ we have $P(d_k) \in Y$, and thus $P(D)$ is in fact a subset of $Y$. Recall also that for odd indices $j$ the functions $d_k$ are integer-valued. Let $T : Y \ra Y$ be an arbitrary linear operator. Put $d_k' = d_k + TP(d_k)$.

\begin{thm} The sum range $D'$ of the series $\sum d_k'$ equal $(I + T)(D)$. \end{thm}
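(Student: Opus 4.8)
The plan is to show that the map $\Phi = I + T$ (acting on $L_2(\Omega)$, or at least on the relevant finite-dimensional piece together with the rest of the space unchanged) induces a bijection between the sum range $D$ of $\sum d_k$ and the sum range $D'$ of $\sum d_k'$. The key observation is that $d_k' = d_k + TP(d_k)$, where $P(d_k)\in Y$ and $T$ maps $Y$ to $Y$, so that $TP(d_k)$ is a piecewise-constant function lying in the finite-dimensional space $Y\subset X$. Since $d_k\to 0$ in $L_2$ and $TP$ is a bounded linear operator, the ``correction'' $c_k := TP(d_k)$ is a sequence of (vector-valued, but finitely-generated) piecewise-constant functions tending to $0$; coordinatizing $Y$ by a basis we may view it as finitely many scalar sequences $c_k^{(i)}\to 0$.

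First I would fix a rearrangement $\sigma$ and analyze the partial sums $\sum_{k=1}^N d_{\sigma(k)}' = \sum_{k=1}^N d_{\sigma(k)} + \sum_{k=1}^N c_{\sigma(k)}$. The plan is to invoke the preceding lemma (the one about $\sum(f_n+c_n)$) componentwise on $Y$: on each cube $Q_{2j-1}$ with odd index the functions $d_k$ are integer-valued, and the piecewise-constant correction restricted there is a constant $c_k^{(i)}$, so the lemma applies directly and tells us $\sum_{k} d_{\sigma(k)}'$ converges on those cubes iff both $\sum d_{\sigma(k)}$ and the scalar series $\sum c_{\sigma(k)}^{(i)}$ converge. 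On the even-indexed cubes $Q_{2j}$ the relation $f_{2j}=f_{2j+1}$ built into $Y$, together with the equality of integrals $\int_{Q_{2j}}d_k = \int_{Q_{2j+1}}d_k$, lets us transfer convergence information from the odd cube to the even one; this is exactly the mechanism already used in Section \ref{3p} to show the limit on $Q_2$ equals the limit on $Q_3$. Hence $\sum d_{\sigma(k)}'$ converges if and only if $\sum d_{\sigma(k)}$ converges and the finitely many scalar series $\sum c_{\sigma(k)}^{(i)}$ converge; but since $c_k = TP(d_k)$ and $P$ is a bounded projection, $\sum_{k=1}^N c_{\sigma(k)} = TP\big(\sum_{k=1}^N d_{\sigma(k)}\big)$, so convergence of $\sum d_{\sigma(k)}$ automatically forces convergence of $\sum c_{\sigma(k)}$ to $TP(\text{limit})$. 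Consequently $\sigma$ makes $\sum d_k'$ converge precisely when it makes $\sum d_k$ converge, and in that case the limit is $\lim \sum d_{\sigma(k)} + TP(\lim \sum d_{\sigma(k)})$.

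Now I would finish by computing this limit. If $\sum d_{\sigma(k)}\to s$ for some $s\in D$, then $P(s)$ is the $X$-component of $s$; since $P(d_k)\in Y$ for all $k$ and $Y$ is closed, $P(s)\in Y$, and $TP(s)=T(P(s))$ is well-defined. The limit of $\sum d_k'$ is then $s + TP(s)$. To identify this with $(I+T)(s)$ one must interpret $T$ as acting on all of $L_2(\Omega)$ via $T\circ P$ (extending by $0$ on the orthogonal complement), or equivalently observe that $s+TP(s) = (I + TP)(s)$ and that on the sum range $D$ — whose points differ from $s$ by elements already controlled — this agrees with the intended operator $I+T$ applied after projection; the cleanest statement is $D' = (I+TP)(D)$, which is what the notation $(I+T)(D)$ abbreviates given that $T$ is only defined on $Y\supseteq P(D)$. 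Running the argument in both directions (starting from a convergent rearrangement of $\sum d_k'$ and subtracting the correction, which is again a finitely-generated piecewise-constant null sequence obtained by applying the bounded operator $TP$ to the partial sums) gives the reverse inclusion, so $D' = (I+T)(D)$ as claimed.

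The main obstacle I anticipate is the bookkeeping on the even-indexed cubes: one must be careful that the lemma, stated for a single scalar sequence $c_n\to 0$ and integer-valued $f_n$, really does apply after decomposing the $Y$-valued correction into coordinates, and that the constraint $f_{2j}=f_{2j+1}$ in $Y$ plus the integral equalities genuinely propagate convergence (not just boundedness) from $Q_{2j-1}$-type cubes to their paired even cubes. The rest — that $\sum c_{\sigma(k)}$ converges whenever $\sum d_{\sigma(k)}$ does, by continuity of the bounded operator $TP$, and that the resulting limit is $(I+T)$ applied to the original limit — is routine once the convergence-equivalence is in hand.
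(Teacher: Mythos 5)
Your argument is correct and follows essentially the same route as the paper: apply the integer-plus-small-constant lemma to the restrictions on the odd-indexed cubes, use the structure of $Y$ (equal values on $Q_{2j}$ and $Q_{2j+1}$) to propagate convergence of the correction $TP(b_k)$ to the even cubes, subtract to recover convergence of $\sum b_k$ to some $b \in D$, and identify the limit as $b + TP(b)$, with the inclusion $(I+T)(D) \subset D'$ being immediate by continuity of $TP$. Your hedge about $(I+T)$ versus $(I+TP)$ is unnecessary since the points of $D$ already lie in $Y$ (their values on paired cubes coincide), so $(I+T)(D)$ is literally well defined; otherwise the proposal matches the paper's proof.
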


\begin{proof} The inclusion $(I + T)(D) \subset D'$ is evident. To prove the inverse inclusion consider an arbitrary arrangement $(b_k')$ of $(d_k')$ and the corresponding rearrangement $(b_k)$ of $(d_k)$. If $(b_k')$ converges to some point $b' \in D'$, then its restrictions to $Q_j$ for odd indices $j$ satisfy the conditions of the lemma. Thus the restrictions to $Q_j$ for odd $j$ of $TP(b_k)$ converge. Now the restrictions of $TP(b_k)$ to $Q_{j-1}$ are equal to the corresponding restrictions to $Q_j$, so the whole series $TP(b_k)$ converges. Then $\sum b_k = \sum (b_k' - TP(b_k))$ also has to converge. The sum of this series $b$ belongs to $D$, hence $b' = b + TP(b)$ belongs to $(I + T)(D)$.\end{proof}

This example can be transferred to any infinite-dimensional Banach space $Y$ using the results of
V.M. Kadets. In \cite{Kadets}, Theorem 7.2.2 states: {\em Let $X$ and $Y$ be Banach spaces, $X
\stackrel{f}{\Rightarrow}Y$. Suppose that $X$ has a basis $\{e_k\}_{k=1}^\infty$ and let
$\sum_{k=1}^\infty x_k$ be a series in $X$ such that SR$(\sum_{k=1}^\infty x_k)$ is not a linear
set. Then for any monotone sequence of positive numbers $\{a_k\}_{k=1}^\infty$ with $a_k \ra
\infty, k\ra \infty$, there exists a series $\sum_{k=1}^\infty y_k$ in $Y$ such that
SR$(\sum_{k=1}^\infty y_k)$ is not a linear set and $\|y_k\| \leq a_k \|x_k\|$ for all $k \in \N$},
Corollary 7.2.1 points out that if $X$ is $l_2$ then by Dvoretzky's theorem
$X\stackrel{f}{\Rightarrow}Y$, and Corollary 7.2.2 states that {\em In any infinite-dimensional
Banach space there are series whose sum range consists of two points}. This is achieved by applying
the two-point example in $L_2$ to Corollary 7.2.1 and following the proof of Theorem 7.2.2 to see
that no new points appear and all the old ones are transferred to the space $Y$. We have an
$n$-point example in $L_2$ which can be in the same manner, through obvious modifications in the
proof of Theorem 7.2.2 transferred to any Banach space $Y$. Finally for any finite-dimensional
subspaces $H_1, H_2$ of a infinitely dimensional Banach space $Y$ and any isomorphism $f:H_1 \ra
H_2$ there exists an isomorphism $\tilde{f}: Y \ra Y$ extending $f$. Thus having any $n$
points satisfying some linear equations as a sum range of $y_k$ in $Y$ we can take an $f$ transferring them to
any other $n$ points satisfying the same linear equations and then transfer the whole series by $\tilde{f}$.

\end{document}